\newtheorem{thm}{Theorem}[section]
\newtheorem{cor}[thm]{Corollary}
\newtheorem{lem}[thm]{Lemma}
\newtheorem{prop}[thm]{Proposition}
\theoremstyle{definition}
\newtheorem{defn}[thm]{Definition}
\theoremstyle{remark}
\newtheorem{rem}[thm]{Remark}
\newcommand{\Z }{\mathbb Z}
\newcommand{\ra }{\rightarrow}
\begin{document}

\title{Torsion homology and cellular approximation}

\author{Ram\'on Flores and Fernando Muro}
\email[Ram\'on Flores]{ramonjflores@us.es}
\email[Fernando Muro]{fmuro@us.es}
\address{IMUS-Universidad de Sevilla, calle Tarfia, 4112, Sevilla (Spain)}

\thanks{The first author was partially supported by FEDER-MEC grant MTM2010-20692. The second author was partially supported by the Andalusian Ministry of Economy, Innovation and Science under the grant FQM-5713, and by FEDER-MEC grant MTM2010-15831. Both authors are partially supported by FEDER-MEC grant MTM2016-76453-C2-1-P.}

\maketitle

\begin{abstract}

In this note we describe the role of the Schur multiplier in the structure of the $p$-torsion of discrete groups. More concretely, we show how the knowledge of $H_2G$ allows to approximate many groups by colimits of copies of $p$-groups. Our examples include interesting families of non-commutative infinite groups, including Burnside groups, certain solvable groups and branch groups. We also provide a counterexample for a conjecture of E. Farjoun.

\end{abstract}



\section{Introduction}

Since its introduction by Issai Schur in 1904 in the study of projective representations, the Schur multiplier $H_2(G,\mathbb{Z})$ has become one of the main invariants in the context of Group Theory. Its importance is apparent from the fact that its elements classify all the central extensions of the group, but also from his relation with other operations in the group (tensor product, exterior square, derived subgroup) or its role as the Baer invariant of the group with respect to the variety of abelian groups. This last property, in particular, gives rise to the well-known Hopf formula, which computes the multiplier using as input a presentation of the group, and hence has allowed to use it in an effective way in the field of computational group theory. A brief and concise account to the general concept can be found in \cite{Ro96}, while a thorough treatment is the monograph by Karpilovsky \cite{Ka87}.

In the last years, the close relationship between the Schur multiplier and the theory of co-localizations of groups, and more precisely with the cellular covers of a group, has been remarked. The cellular notions were developed in the nineties by Farjoun and Chach\'olski (\cite{Fa96},\cite{Ch96}) in the category of pointed topological spaces, with a twofold goal: to define a hierarchy in the spaces by means of cellular classes, as Bousfield attempted with his periodicity functors \cite{Bo94} and Ravenel with thick categories in the stable category \cite{Ra92}, and to isolate in a rigorous way the structure of a space that depended of a fixed one, generalizing the idea of CW-complex by J.H.C. Whitehead. These ideas have shown to be very fruitful, and have been recently applied in different categories, as for example triangulated categories, $R$-modules, or derived categories (see for example \cite{DGI06}).

After developing the cellular theory for spaces, it was natural that the next context to apply these tools should be Group Theory: first, because some of the main classical invariants of Homotopy Theory are indeed groups (homology groups, homotopy groups, etc.); second, because there are important and well-known functors that pass from spaces to groups, in particular the fundamental group functor (in this sense, the relation between this functor and localization has already been studied, see \cite{Ca00}); and third, because the cellular theory is based on the notion of (homotopy) direct limit, which is in fact a generalization to the homotopy context of the direct limit of groups.

Given a group $G$, there are strong ties between the cellular covers of $G$ and its Schur multiplier. Recall that a \emph{cellular cover} of a group $G$ is a pair $(H,cw)$ where $H$ is a group and $cw:H\ra G$ is a map such that the induced map $Hom(H,H)\ra Hom(H,G)$ is a bijection. Given a pair of groups $A$ and $H$, there exists a distinguished cellular cover $(cell_AH,cw_A)$ which is roughly speaking the more accurate approximation of $H$ that can be constructed as direct limit (maybe iterated) of copies of $A$ (more information in next section). Since pioneering work of Rodr\'{i}guez-Scherer in \cite{RoSc01} and subsequent research by several authors, it has been established that the cellular cover $cell_AG$ of a group $G$ always lies in the middle of a central extension $K\ra cell_AG\ra S_AG$, where $S_AG$ is the subgroup of $G$ generated by images of homomorphisms from $A$. The kernel $K$ is usually the most difficult part to compute, and it is strongly related with the structure of the Schur multiplier of $G$ or a distinguished subgroup of it. In the next section we review the previous work in which the structure of $K$ has been investigated, particularly when $G$ is abelian.

Fix a prime $p$. In this paper we are interested in $\mathbb{Z}/p$-cellular approximation, which has given a new way to understand the $p$-primary structure of the group $G$. In this sense, we have been able to identify a wide range of groups for which the aforementioned kernel $K$ is a concrete quotient of the Schur multiplier of $S_{\Z /p}G$. For example, this will happen for groups for which $S_{\Z /p}G$ is finitely presented -or more generally finitely $L$-presented- groups for which the Schur multiplier of $S_p G$ is finitely generated, groups such that $H_2 S_pG$ is free abelian, or groups for which $H_2 S_pG$ is torsion. This study is undertaken in Section 3, and is primarily based on ideas of Chach\'olski that are described at the beginning of the section.

As we are describing a certain approach to the torsion in discrete groups, it is very interesting for us to understand the $\Z /p$-cellularization of torsion groups, a task which is carried out in Section 4. According to the previous paragraph, we are able to identify the kernel of the cellular cover when $H_2 S_pG$ is a torsion group, so we should know when this happens for a torsion group. In fact, as $S_pG$ is torsion if $G$ is, this leads us to a more general  question: if $G$ is a torsion group, when are so their ordinary homology groups? Work of Ol'shanskii and others guarantees that the general answer is no, but the counterexamples are quite exotic, and then it is interesting to identify ``big" classes of groups for which the answer to the question is positive. In this sense, we state in Proposition \ref{EAtorsion} that the ordinary homology is torsion for elementary amenable torsion groups. Moreover, we discuss some bigger classes for which this fact may hold, and compute the $\Z /p$-cellular approximation of some non-elementary amenable torsion groups, as the first Grigorchuk group and the Gupta-Sidki group. Continuing this study, in Section 5 we deal with the Burnside group $B(2,p)$ (for $p>665$), which is a torsion non-amenable group whose Schur multiplier is free abelian. We compute its $\Z /p$-cellularization (obtaining in this way a new counterexample to a conjecture of Farjoun) and we describe some exotic features of its classifying space.


\textbf{Acknowledgments.} We thank Primo\u{z} Moravec, Greg Oman, Adam Prze\'{z}dziecki, Andrei Jaikin, Laurent Bartholdi, Juan Gonz\'alez-Meneses and Sara Arias de Reyna for their useful comments and suggestions, Manfred Dugas for his example of a finite rank non-minimal abelian group, and J\'{e}r\^{o}me Scherer for his careful reading of the draft.

\section{Background}

In this section we will describe previous work relating the Schur multiplier and cellular covers.

As stated in the introduction, the first ideas that aimed to classify objects in a category by means of a cellular hierarchy go back to Farjoun and Chach\'olski. Inspired by early work of J.H.C. Whitehead, and working in the category of pointed spaces, they defined for every pair of spaces $A$ and $X$ the $A$-cellular approximation of $X$ as the ``closest" space to $X$ that can be built out of copies of $A$ by means of iterated (pointed) homotopy colimits. The notion of closeness here is given by the pointed mapping space (see Chapter 2 in \cite{Fa96} for a precise definition). Chach\'olski also defines the related notion of closed (or cellular) class of spaces.

Building on these ideas, Rodr\'{i}guez-Scherer define in \cite{RoSc01} the \emph{cellular class} $\mathcal{C}(A)$ of a group $A$ as the smallest class of groups that contains $A$ and is closed under colimits; its elements are called $A$-cellular groups. Then, they prove the existence of a functor $cell_A:Groups\rightarrow\mathcal{C}(A)$ that is right adjoint to the inclusion $\mathcal{C}(A)\rightarrow Groups$. The proof consists, for every group $G$, in building effectively $cell_AG$ as an infinite telescope of groups that are themselves free products of copies of $A$. This functor is augmented and idempotent, and for every $A$, $cell_AG$ will be called the $A$-\emph{cellularization} or $A$-\emph{cellular approximation} of $G$. The group $cell_AG$ can also be defined in the following way:

\begin{defn}

Given groups $A$ and $G$, the $A$-\emph{cellularization} of $G$ is the unique $A$-cellular group $cell_AG$ such that there exists a homomorphism $\eta:cell_AG\rightarrow G$ in such a way that any homomorphism $H\rightarrow G$ from a $A$-cellular group $H$ factors through $\eta$ in a unique way.

\end{defn}

Recall that given an abelian group $G$, a \emph{Moore space} is a space $M(G,n)$ such that $H_nM(G,n)=G$ and $H_jM(G,n)=0$ if $j\neq n$. When $A$ is a group such that there exists a Moore space $M(A,1)$ of dimension two (as for example, a finite cyclic group), Rodr\'{i}guez-Scherer also identify the $A$-cellularization of $G$ by means of a central extension: $$K\rightarrow cell_AG\rightarrow S_AG.$$ Here $S_AG$ is the $A$-\emph{socle} of $G$, i.e. the normal subgroup of $G$ generated by images of homomorphisms from $A$, while $K$ is a group such that $Hom(A_{ab},G)=0$. This kernel $K$ will be the key object that imbricate the cellular approximation with the homology of the base group. Moreover, the relation between cellularization of groups and cellularization of spaces with regard to Moore spaces is further investigated by the authors in \cite{RoSc08}.

The first attempt to understand the role of the homology in the cellular constructions was undertaken in \cite{Fl07}, for the case $A=\mathbb{Z}/p$, $p$ prime. As there is a 2-dimensional model for $M(\mathbb{Z}/p,1)$, the description of $cell_AG$ as an extension works, and from the homotopical construction of the extension it was deduced that, for $A$ finite, the kernel $K$ could be identified with the quotient of the Schur multiplier by its $p$-torsion group; in the present note we extend this result to infinite groups. Moreover, it was proved in that paper that in some cases in which $G$ is finite and perfect, $cell_{\mathbb{Z}/p}G$ can be identified with the universal central extension of $G$.

In \cite{FGS07}, Farjoun-G\"{obel}-Segev introduce a new point of view. For these authors, a homomorphism $A\rightarrow G$ is a \emph{cellular cover} if it induces by composition an isomorphism $Hom(A,A)\simeq Hom(A,G)$. It is not hard to see that $A\rightarrow G$ is a cellular cover if and only if $A$ is the $A$-cellular approximation of $G$. In particular, if one wishes to classify all the cellular covers of a given group $G$, it should be necessary to understand which groups can appear as kernels of the homomorphisms $A\rightarrow G$. In the mentioned paper it is proved for every cellular cover $A\rightarrow G$ that if $G$ is nilpotent then $K$ is torsion-free, if $G$ is abelian then $K$ is reduced, and if $G$ is finite then $K$ is so and $K\subseteq [A,A]$. It was also proved there that the universal central extension of any perfect group (whose kernel is the Schur multiplier) is always a cellular cover, generalizing the quoted result for finite groups.


After this paper, there has been a lot of interest in classifying cellular covers of abelian groups, as well as in describing the kernels of the covers. In the paper \cite{FGSS07}, the problem is solved for abelian divisible groups;  Fuchs-G\"{o}bel \cite{FuGo09}, relying in part on work of Buckner-Dugas \cite{BuDu06}, address the reduced case and give an accurate description of which groups can appear as kernels; Farjoun-G\"{obel}-Segev-Shelah \cite{FGSS07} have presented groups for which the cardinality of all the possible kernels of cellular covers is unbounded; Fuchs has investigated covers of totally ordered abelian groups \cite{Fu11}, and there is also work of Rodr\'{i}guez and Str\"{u}ngmann on the cotorsion-free case (\cite{RS12}, \cite{RS15}). However, the descriptions of the kernels given in these papers are usually not very explicit, and the authors do not investigate the possible relations of the kernels with the homology of the groups involved.

The relation of the cellular approximation with the second homology group is described in a more general way in \cite{CDFS09}. It is proved there that for finite groups $A$ and $G$ with $A$ finitely generated and such that $H_2A=S_GH_2A$, the kernel of the cellularization $cell_AG\rightarrow G$ is always the quotient of the Schur multiplier of $G$ by its $A$-socle, generalizing in this way the aforementioned result of \cite{Fl07}, that only dealt with the case $A=\mathbb{Z}/p$. This statement is true, in particular, when $A$ is finitely generated and nilpotent. It is also remarkable that this is proved using only group-theoretic tools, with no need of homotopical background. Some of the authors of the previous paper took a big step forward in \cite{BCFS12}, where it is proved (using the finiteness of the Schur multiplier for finite groups) that the number of cellular covers of finite group is always finite. If the group is moreover simple, then the non-trivial cellular covers of it are in bijective correspondence with the subgroups of the Schur multiplier that are invariant under the action of the automorphisms of the group.

To our knowledge, not a lot has been published concerning the cellular approximations of infinite non-nilpotent groups, so this note can be considered a step in that direction. We are only aware of G\"{o}bel's work \cite{Go12} and Petapirak's thesis \cite{Pe16} (see Section 5), although the work of these authors is addressed to a different problem, as it is the understanding of the group varieties that are closed under cellular covers and localization.

\section{Cellular covers}

We will start this section by fixing some notation and reviewing some previous results that we will need afterwards.

\subsection{Chach\'olski theory and notation}

The rigorous foundation of the theory of cellular approximations in the homotopy context was carried out by Chach\'olski in \cite{Ch96}. The following result was proved in that paper, and it is possibly the most powerful available gadget to compute $cell_A$. Recall that given a space $X$, its $A$-\emph{nullification} $P_A$ is its localization with respect to the constant map $A\rightarrow \ast$, and the spaces $X$ for which $P_AX$ is contractible are called $A$-\emph{acyclic}, see \cite{Fa96} for details. Analog definitions are used in the category of groups: given groups $A$ and $G$, the $A$-nullification $P_A$ is the localization of $G$ with respect to the constant homomorphism $A\rightarrow \{1\}$, and $G$ is $A$-acyclic if and only if $P_AG=\{1\}$.

\begin{thm}{\emph{(Theorem 20.5 in \cite{Ch96})}}

Let $A$ and $X$ be pointed spaces. Consider the evaluation map $f:\bigvee_{[A,X]_*}A\rightarrow X$, where the wedge is extended to all the homotopy classes of pointed maps $A\rightarrow X$. Let $C$ be the homotopy cofibre of the map $f$, and $\Sigma A$ the suspension of $A$. Then there exists a homotopy fibration: $$cell_AX\rightarrow X\rightarrow P_{\Sigma A}C$$

\end{thm}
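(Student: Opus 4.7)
The plan is to verify that the homotopy fibre $F\coloneq \mathrm{hofib}(X\to P_{\Sigma A}C)$ satisfies the two properties that characterise $cell_A X$: (a) $F$ is $A$-cellular; (b) the natural map $F\to X$ is an $A$-cellular equivalence, meaning that it induces a weak equivalence $\mathrm{map}_*(Y,F)\simeq \mathrm{map}_*(Y,X)$ for every $A$-cellular $Y$. Together these two properties force the identification $F\simeq cell_A X$ via the universal property of the cellularisation.

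For (b), the fibre sequence $F\to X\to P_{\Sigma A}C$ induces a fibre sequence of pointed mapping spaces $\mathrm{map}_*(Y,F)\to \mathrm{map}_*(Y,X)\to \mathrm{map}_*(Y,P_{\Sigma A}C)$, so it suffices to show the right-hand term is contractible for every $A$-cellular $Y$. Since the class of $Y$ with this property is closed under pointed homotopy colimits and contains the point, the verification reduces to the case $Y=A$. Now $P_{\Sigma A}C$ is $\Sigma A$-null, so $\Omega\,\mathrm{map}_*(A,P_{\Sigma A}C)\simeq \mathrm{map}_*(\Sigma A,P_{\Sigma A}C)\simeq *$, which makes $\mathrm{map}_*(A,P_{\Sigma A}C)$ homotopy discrete and reduces the task to showing $[A,P_{\Sigma A}C]_*=*$. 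Every pointed map $A\to X$ factors through the evaluation wedge $\bigvee_{[A,X]_*}A\to X$ by construction, and so is null already in $C$; a transfinite Puppe-sequence argument following the inductive construction of $P_{\Sigma A}C$ from $C$ (by coning off maps out of the $\Sigma^nA$) extends this nullity to every class in $[A,P_{\Sigma A}C]_*$.

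Step (a) is the crux. Factoring the composite as $X\to C\to P_{\Sigma A}C$ gives an associated fibration of fibres $\mathrm{hofib}(X\to C)\to F\to \mathrm{hofib}(C\to P_{\Sigma A}C)$, whose base is the fibre of a $\Sigma A$-nullification and is cellularly controlled by Chach\'olski's fibre-of-nullification theorem. The left-hand fibre admits a natural map from $\bigvee_{[A,X]_*}A$ refining the evaluation, and a Ganea-type comparison identifies its cofibre as a join involving $\Omega C$; combined, these ingredients let us assemble $F$ as an iterated pointed homotopy colimit built from copies of $A$. The principal obstacle is precisely this last step: the class of $A$-cellular spaces is not in general closed under arbitrary fibrations, so the conclusion depends on the specific structure of the base as a fibre of a nullification, and converting the Ganea and fibre-of-nullification data into an explicit cellular presentation of $F$ is the technical heart of Chach\'olski's original argument.
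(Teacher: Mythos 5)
First, note that the paper does not actually prove this statement: it is quoted verbatim from Chach\'olski (Theorem 20.5 of \cite{Ch96}) and used as a black box, so there is no internal proof to compare yours against. Judged on its own terms, your proposal is a sensible roadmap of Chach\'olski's argument --- characterising $cell_AX\to X$ as the unique $A$-equivalence out of an $A$-cellular space and splitting the task into (a) cellularity of the fibre $F$ and (b) the $A$-equivalence property is exactly the right frame --- but as written it is an outline rather than a proof, and two steps are genuinely problematic.

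For (b): by demanding $\mathrm{map}_*(Y,P_{\Sigma A}C)\simeq *$ for \emph{all} $A$-cellular $Y$ you force yourself to prove $[A,P_{\Sigma A}C]_*=*$, and the ``transfinite Puppe-sequence argument'' you invoke does not work as described: a map from $A$ into a stage of the nullification tower does not lift to the previous stage (cofibrations have no such lifting property), and since $A$ is an arbitrary pointed space, a map from $A$ into the transfinite colimit need not factor through any stage at all. The difficulty is self-inflicted: the universal property only requires $\mathrm{map}_*(A,F)\to\mathrm{map}_*(A,X)$ to be a weak equivalence, and for that it suffices that (i) $\mathrm{map}_*(A,P_{\Sigma A}C)$ be homotopy discrete (your $\Sigma A$-nullity argument, with the caveat that $\mathrm{map}_*(\Sigma A,Z)\simeq\Omega\,\mathrm{map}_*(A,Z)$ only sees the component of the constant map; the other components require the half-smash comparison $A\to S^n\ltimes A\to \Sigma^nA$), and (ii) every composite $A\to X\to P_{\Sigma A}C$ be null, which is immediate because it factors through $C$ where it is coned off by construction. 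The upgrade to arbitrary cellular $Y$ is then Farjoun's theorem that an $A$-equivalence is a $Y$-equivalence for every $A$-cellular $Y$. For (a), you correctly name the fibre-of-nullification and Ganea ingredients, but you then concede that assembling them into a cellular presentation of $F$ ``is the technical heart of Chach\'olski's original argument''; since the entire difficulty of the theorem is concentrated precisely there (cellular classes are not closed under passage to fibres of arbitrary maps), deferring that step means the central claim is asserted rather than proved.
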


In particular, given a group $G$, this result is crucial in proving that for certain choices of $A$, the group cellularization with respect to a group $A$ can be described by means of an extension $K\rightarrow cell_AG\rightarrow S_AG$, as explained in the previous section. As we are going to use this material frequently through the paper, the map $\bigvee_{[A,X]_*}A\rightarrow X$ will be called $\emph{Chach\'olski cofibration}$, its homotopy cofibre will be \emph{Chach\'olski cofibre}, and the fibre sequence $cell_AX\rightarrow X\rightarrow P_{\Sigma A}X$ will be referred to as \emph{Chach\'olski fibration}.

Moreover, given a discrete group $G$, we denote by $S_pG$ its $p$-socle, i.e. its subgroup $S_{\Z /p}G$ generated by order $p$ elements. Every group for which the inclusion $S_pG\leq G$ is an equality will be called $p$-generated. The previous description of $cell_{\Z /p}G$ as an extension immediately implies the following:

\textbf{Crucial fact}. For every discrete group $G$, the inclusion $S_pG\leq G$ induces an isomorphism $cell_{\Z /p}S_pG\simeq cell_{\Z /p}G$. In particular, the computation of $cell_{\Z /p}G$ is always reduced to the computation of $cell_{\Z /p}S_pG$, and it will be only necessary to compute $\Z /p$-cellular approximations of $p$-generated groups.

Let $p$ a fixed prime, $G$ a discrete $p$-generated group. We are interested in $\Z /p$-cellularization, and its construction is derived \cite{RoSc01} from cellularization with respect to Moore spaces, so we will denote by $M$ a two-dimensional model for the Moore space $M(\Z /p,1)$, as for example the homotopy cofibre of the degree $p$ self-map of the circle. This space will play the role of $A$ in Chach\'olski fibration, while $X$ will be $BG$, the classifying space of the group $G$. Once the group $G$ is fixed, Chach\'olski cofibration and fibration will be considered with respect to these choices of $A$ and $X$. Usually Chach\'olski cofibre will be denoted by $C$ if the group is understood. Moreover, in the remaining of the paper, the homology will be always considered with coefficients in $\Z$. Finally, given an abelian group $L$, we will denote by $TL$ its torsion subgroup, and by $T_pL$ its $p$-torsion subgroup.

We would like to remark that, \emph{mutatis mutandis}, the majority of our results will be also valid if we change $\Z /p$ by $\Z /p^j$ with $j>1$. Moreover, given a set $P$ of primes, they will remain correct changing $\mathbb{Z}/p$ by the free product of copies of the cyclic groups $\Z /q$, $q\in P$, and $M(\Z /p,1)$ by the corresponding Moore space with regard to the direct sum of the primes in $P$. However, for the sake of readiness, we will concentrate in the case of just one prime.

\subsection{The kernel of the cellular approximation}

In our search of a characterization of a sharper description of $cell_{\Z /p}G$, our initial goal was to generalize Theorem 4.4 in \cite{Fl07} for infinite discrete groups. That statement gives a way to compute the $\Z /p$-cellular approximation of a ($p$-generated) finite group out of a certain quotient of its Schur multiplier, but only for groups subject to quite strong restrictions. In this section we show how to relax these restrictions, and prove that the result is true far beyond the realm of finite groups. We start by giving a useful definition that will help to deal with the class of groups for which our methods will work.

\begin{defn}

We define the class $\mathcal{C}_p$ as the class of $p$-generated groups for which the kernel of the map $cell_{\Z /p}G\rightarrow G$ is isomorphic to $H_2G/T_pH_2G$.

\end{defn}

Our goal will be to identify groups whose $p$-socle lies in the class $\mathcal{C}_p$, as for these groups the kernel of the $\Z /p$-cellularization is then perfectly described. Our key result will be the following, because it gives a criterion to decide when a $p$-generated group belongs to $\mathcal{C}_p$.

\begin{prop}

In the previous notation, let $G$ be $p$-generated, and assume that there is an isomorphism $H_2G/T_pH_2G\simeq H_2C/T_2H_2C$. Then $G$ belongs to $\mathcal{C}_p$.

\label{H2iso}
\end{prop}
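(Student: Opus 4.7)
The plan is to read off the kernel $K$ of $cell_{\Z /p}G\to G$ from the long exact sequence of the Chach\'olski fibration
$$cell_{\Z /p}BG \to BG \to P_{\Sigma M}C,$$
and then use the hypothesis to rewrite the answer in terms of $H_2G$. First I would verify that $C$ is simply connected: from the Chach\'olski cofibration $\bigvee_{[M,BG]_*} M\to BG\to C$, van Kampen yields $\pi_1(C)\simeq G/S_pG$, which is trivial because $G$ is $p$-generated. Nullification preserves simple connectivity (the cones attached in constructing $P_{\Sigma M}C$ have cells of dimension $\ge 3$), so $P_{\Sigma M}C$ is also simply connected.

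Next I would extract the kernel from the homotopy long exact sequence of the Chach\'olski fibration. Using $\pi_2(BG)=0$ (since $BG$ is aspherical) and $\pi_1(P_{\Sigma M}C)=1$, the sequence collapses to
$$0\to \pi_2(P_{\Sigma M}C)\to cell_{\Z /p}G\to G\to 1,$$
so $K\simeq \pi_2(P_{\Sigma M}C)$. The heart of the argument is then the identification $\pi_2(P_{\Sigma M}C)\simeq H_2C/T_pH_2C$. Since $C$ is simply connected, Hurewicz identifies $\pi_2(C)$ with $H_2C$. The cofibre sequence $S^2\xrightarrow{p}S^2\to\Sigma M$ forces any $\Sigma M$-null simply connected space to have trivial $p$-torsion in $\pi_2$, hence the unit map $C\to P_{\Sigma M}C$ factors through a canonical map $H_2C/T_pH_2C\to \pi_2(P_{\Sigma M}C)$. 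Proving this map is an isomorphism is the main obstacle; the cleanest route I see is to compare second Postnikov sections, using that $P_{\Sigma M}K(A,2)\simeq K(A/T_pA,2)$ for any abelian group $A$ (a consequence of $K(T_pA,2)$ being $\Sigma M$-acyclic), together with the compatibility of nullification with the second Postnikov truncation in this 1-connected setting.

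Combining these steps with the hypothesis gives
$$K\simeq \pi_2(P_{\Sigma M}C)\simeq H_2C/T_pH_2C\simeq H_2G/T_pH_2G,$$
which is precisely the statement $G\in\mathcal{C}_p$. The principal technical difficulty is concentrated in the computation of $\pi_2(P_{\Sigma M}C)$; the rest is formal manipulation of the two standard sequences attached to Chach\'olski's construction. One should in particular check that the iterative construction of $P_{\Sigma M}C$ does not introduce hidden $p$-divisible contributions in $\pi_2$, and it is here that the simple connectivity of $C$ (hence the absence of a nontrivial $\pi_1$-action on higher homotopy) plays a decisive role.
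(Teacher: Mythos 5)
Your proposal follows essentially the same route as the paper: simple connectivity of the Chach\'olski cofibre $C$ (the paper is terse here, but the van Kampen argument you give is the right one), Hurewicz to identify $\pi_2C$ with $H_2C$, the identification $\pi_2(P_{\Sigma M}C)\simeq H_2C/T_pH_2C$, and the long exact sequence of the Chach\'olski fibration to read off the kernel. The only difference is cosmetic: for the key computation of $\pi_2$ of the nullification the paper invokes the iterative pushout construction of $P_{\Sigma M}$ directly, whereas you propose a Postnikov-section comparison, but both rest on the same standard fact about $M(\Z/p,2)$-nullification of simply connected spaces.
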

\begin{proof}

Consider Chach\'olski cofibration for the classifying space $BG$, and let $C$ be its homotopy cofibre. It is clear by Whitehead Theorem that $C$ is simply-connected, so the Hurewicz homomorphism $\pi_2C\rightarrow H_2C$ is an isomorphism. In particular, by Lemma 6.9 in \cite{Dw96}, $P_{\Sigma M}C$ is also simply-connected, and the definition of nullification as a limit of a sequence of push-outs implies that $\pi_2P_{\Sigma M}C=\pi_2C/T_p\pi_2C$, since $M$ is a Moore space. But as $\pi_2C=H_2C$ again by Hurewicz theorem, the hypothesis implies that $\pi_2P_{\Sigma M}C\simeq H_2G/T_pH_2G$, and now the result follows from the long exact homotopy sequence of Chach\'olski fibration.

\end{proof}

Although the nature of the previous result is quite technical, it opens the door to identify the kernel of $cell_{\Z /p}G$ for big families of groups. We will start our description of them with an easy lemma.

\begin{lem}
\label{p'-torsion}

Let $p$ be a prime, and $P$ its complementary sets of primes. Consider an extension $A\stackrel{i}{\rightarrow }B\rightarrow V$ of abelian groups, with $V$ a $\mathbb{F}_p$-vector space.  Let $T_{P}A$ and $T_{P}B$ the corresponding subgroups whose elements are $P$-torsion. Then a homomorphism $A\rightarrow B$ is an isomorphism if and only if the induced homomorphism $A/T_PA\rightarrow B/T_PB$ is an isomorphism.

\end{lem}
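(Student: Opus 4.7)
The plan is to dispatch the forward direction by bare functoriality of the assignment $M\mapsto M/T_PM$ on abelian groups, and then concentrate on the non-trivial converse. The cornerstone of the converse will be the observation that, because $V$ is an $\mathbb{F}_p$-vector space, every element of $V$ is annihilated by $p$; since $p\notin P$, any $v\in V$ that is simultaneously $P$-torsion must satisfy both $pv=0$ and $nv=0$ for some $n$ coprime to $p$, forcing $v=0$. Hence $T_PV=0$.

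From this I would deduce that $i$ restricts to an isomorphism $T_PA\stackrel{\sim}{\ra}T_PB$. The inclusion $i(T_PA)\subseteq T_PB$ is automatic, and conversely any $P$-torsion element of $B$ projects to a $P$-torsion element of $V$, which must vanish by the previous step; therefore such an element already lies in $i(A)$, and hence in $T_PA$.

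The final step is to assemble the commutative diagram of short exact sequences
\[
\begin{array}{ccccccccc}
0 & \to & T_PA & \to & A & \to & A/T_PA & \to & 0 \\
  &     & \downarrow & & \downarrow i & & \downarrow & & \\
0 & \to & T_PB & \to & B & \to & B/T_PB & \to & 0
\end{array}
\]
and invoke the five lemma: the leftmost vertical arrow is an isomorphism by the previous paragraph, the rightmost is an isomorphism by hypothesis, so the middle map $i$ is an isomorphism as well.

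No step presents a serious obstacle; the entire argument is a routine torsion-theoretic diagram chase. The single conceptual ingredient is the vanishing $T_PV=0$, which reduces immediately to the assumption that $V$ is annihilated by $p$ together with the choice of $P$ as the set of primes different from $p$.
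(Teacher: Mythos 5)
Your proof is correct and follows essentially the same route as the paper's: both identify $T_PA\cong T_PB$ from the structure of the extension (you justify this more explicitly via the observation $T_PV=0$) and then apply the five lemma to the two short exact sequences $0\to T_P(-)\to (-)\to (-)/T_P(-)\to 0$. Note that, exactly like the paper's argument, yours really establishes the claim for the structure map $i$ of the extension (or any homomorphism restricting to an isomorphism on $P$-torsion subgroups), which is the case used in the subsequent applications.
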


\begin{proof}

The 'only if' part is clear. For the 'if' part, observe that the structure of the previous extension implies that $T_PA$ is isomorphic to $T_PB$, and moreover every isomorphism between $A$ and $B$ restricts to an isomorphism between $T_PA$ and $T_PB$. Hence, the five lemma applied to the diagram

$$
\xymatrix{ T_PA \ar[r] \ar[d] & A  \ar[r] \ar[d] & A/T_PA \ar[d] \\
T_pB \ar[r] & B  \ar[r] & B/T_PB }
$$

concludes the argument.

\end{proof}

The previous result proves that the torsion outside $p$ in the Schur multiplier is not relevant when deciding the possible equality between $H_2G / H_2T_pG$ and $H_2C / H_2T_pC$. Hence, we can formulate the following statement:

\begin{prop}
\label{H2torsionfree}

Let $G$ be a $p$-generated group such that the torsion-free groups $H_2G/ TH_2G$ and $H_2C/TH_2C$ are isomorphic. Then $G\in \mathcal{C}_p$.

\end{prop}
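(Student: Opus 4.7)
The plan is to apply Proposition~\ref{H2iso}; for that, it suffices to upgrade the hypothesized isomorphism $H_2G/TH_2G \simeq H_2C/TH_2C$ (between the fully torsion-free quotients) to an isomorphism $H_2G/T_pH_2G \simeq H_2C/T_pH_2C$ (between the $p$-torsion-free quotients). The tool for this upgrade is Lemma~\ref{p'-torsion}, which applies as soon as the two groups have been placed in a short exact sequence whose cokernel is an $\mathbb{F}_p$-vector space.

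First I would extract the ambient extension from the homology long exact sequence of the Chach\'olski cofibration $\bigvee_{[M,BG]_*} M \to BG \to C$. Since $M = M(\Z/p,1)$ has $\tilde H_2 M = 0$ and $\tilde H_1 M = \Z/p$, and reduced homology commutes with wedges, the sequence collapses to
\begin{equation*}
0 \to H_2G \xrightarrow{i} H_2C \to V \to 0,
\end{equation*}
where $V$, being a subgroup of a direct sum of copies of $\Z/p$, is an $\mathbb{F}_p$-vector space. Because $i$ is injective one has $i^{-1}(T_pH_2C) = T_pH_2G$, so $i$ descends to an injection $\bar i\colon H_2G/T_pH_2G \hookrightarrow H_2C/T_pH_2C$, and a short diagram chase identifies its cokernel with a quotient of $V$, hence again with an $\mathbb{F}_p$-vector space.

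Now I would apply Lemma~\ref{p'-torsion} to $\bar i$, with $A = H_2G/T_pH_2G$ and $B = H_2C/T_pH_2C$. Since $T_pA = 0 = T_pB$, the torsion subgroups $TA$ and $TB$ coincide with $T_PA$ and $T_PB$, so that $A/T_PA = H_2G/TH_2G$ and $B/T_PB = H_2C/TH_2C$; the hypothesis is thus exactly the isomorphism on $P$-torsion-free quotients demanded by the lemma, which promotes $\bar i$ to an isomorphism $H_2G/T_pH_2G \simeq H_2C/T_pH_2C$. Proposition~\ref{H2iso} then delivers $G \in \mathcal{C}_p$. I expect the only step with real content to be the cokernel computation after modding out $T_p$; the remainder is a matter of unwinding the torsion decomposition of an abelian group and invoking the previously established results.
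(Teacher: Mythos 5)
Your proposal follows essentially the same route as the paper: both extract the extension $H_2G\hookrightarrow H_2C$ with $\mathbb{F}_p$-vector-space cokernel from the homology exact sequence of the Chach\'olski cofibration, use Lemma~\ref{p'-torsion} to upgrade the isomorphism of torsion-free quotients to one of $p$-torsion-free quotients, and conclude via Proposition~\ref{H2iso}. You are somewhat more explicit than the paper in passing to the quotients by $T_p$ and identifying the cokernel of $\bar i$, but the argument is the same (and you inherit the same mild looseness as the paper in reading the hypothesized abstract isomorphism as the one required by Lemma~\ref{p'-torsion}).
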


\begin{proof}

Consider again Chach\'olski cofibration for $BG$. As $H_2M=0$, Mayer-Vietoris sequence gives an exact sequence of abelian groups:

$$0\rightarrow H_2 G\rightarrow H_2C\rightarrow V \rightarrow H_1 G\rightarrow 0,$$ where $V$ is an $\mathbb{F}_p$-vector space.


 As $G$ is $p$-generated, $H_1G$ is also an $\mathbb{F}_p$-vector space, and $H_2C$ can be described as an extension $ H_2G\rightarrow H_2C\rightarrow W$, where the dimension of the $\mathbb{F}_p$-vector space $W$ is smaller or equal to that of $V$ (in particular, they will be the same if and only if $G$ is perfect). By Proposition \ref{H2iso}, the statement holds if $H_2 G/T_pH_2G$ is isomorphic to $H_2 C/T_pH_2C$, and by Lemma \ref{p'-torsion} if $H_2 G/TH_2G$ is isomorphic to $H_2 C/TH_2C$. So we are done.

\end{proof}

From now on, we will concentrate in finding $p$-generated groups for which the hypothesis of the previous result hold. The first relevant and important case appears when the Schur multiplier is torsion.

\begin{prop}
\label{torsion}

Let $G$ be a $p$-generated group whose Schur multiplier is torsion. Then $G$ belongs to $\mathcal{C}_p$.

\end{prop}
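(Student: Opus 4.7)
The plan is to reduce this directly to Proposition \ref{H2torsionfree} by showing that the hypothesis there becomes trivially true when $H_2G$ is torsion. Concretely, I want to argue that both $H_2G/TH_2G$ and $H_2C/TH_2C$ vanish, so the isomorphism required by Proposition \ref{H2torsionfree} holds vacuously.

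First I would unpack the four-term exact sequence
\[
0 \to H_2G \to H_2C \to V \to H_1G \to 0
\]
that was established inside the proof of Proposition \ref{H2torsionfree} via Mayer--Vietoris applied to the Chach\'olski cofibration (using $H_2M = 0$ and the fact that $M$ is a Moore space $M(\Z/p,1)$). Here $V$ is an $\mathbb{F}_p$-vector space, since it arises as a direct sum of copies of $H_1 M = \Z/p$. Breaking this into a short exact sequence yields that $H_2C$ is an extension of an $\mathbb{F}_p$-vector space $W \subseteq V$ by $H_2G$.

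Next, since $G$ is $p$-generated, $H_2G$ is torsion by hypothesis, and the quotient $W$ is a $p$-torsion group, the extension $H_2G \to H_2C \to W$ forces $H_2C$ to be torsion as well (extension of torsion by torsion is torsion). Consequently $H_2G/TH_2G = 0$ and $H_2C/TH_2C = 0$, so the induced homomorphism between the torsion-free quotients is trivially an isomorphism.

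Finally, applying Proposition \ref{H2torsionfree} gives $G \in \mathcal{C}_p$. There is essentially no obstacle here beyond verifying the Mayer--Vietoris computation, which is already baked into the previous proof; the statement is really a clean corollary of Proposition \ref{H2torsionfree} together with the elementary closure of torsion abelian groups under extensions.
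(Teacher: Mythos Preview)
Your argument is correct and essentially identical to the paper's: both use the Mayer--Vietoris extension $H_2G\to H_2C\to W$ to conclude that $H_2C$ is torsion whenever $H_2G$ is, so the torsion-free quotients are trivially isomorphic. The only cosmetic difference is that you invoke Proposition~\ref{H2torsionfree} at the end while the paper cites Proposition~\ref{H2iso}, but the content of the deduction is the same.
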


\begin{proof}

Let $G$ be $p$-generated, and recall from the proof of Proposition \ref{H2iso} the extension $H_2G\rightarrow H_2C\rightarrow W$, which is derived from Mayer-Vietoris exact sequence of Chach\'olski cofibration. As $H_2G$ is torsion, $H_2C$ is so. Hence, the respective quotients by the torsion are trivial, and then isomorphic, so we can apply again Proposition \ref{H2iso} and we are done.

\end{proof}

\begin{rem}

Note that the isomorphism between the quotients by the $p$-torsion is in this case induced by the map $BG\rightarrow C$. This is in fact stronger than the hypothesis of Proposition \ref{H2iso}, in which it is not needed that the isomorphism is induced in that way.

\end{rem}

The condition about the Schur multiplier in the previous proposition does \emph{not} immediately imply that the previous proposition is true for torsion groups. In fact, to understand for which torsion groups it is true that their homology is torsion is an interesting issue that will be the main topic of the next section.

 Next we will deal with $p$-generated groups such that $H_2G$ is not torsion, and we will need to check when $H_2G/TH_2G$ is isomorphic to $H_2C/TH_2C$. These quotients are torsion-free, so we are led to the following interesting group-theoretic question:

\textbf{Question}. Let $A\stackrel{f}{\rightarrow }B\stackrel{g}{\rightarrow}V$ be an extension of abelian groups, where $V$ is an $\mathbb{F}_p$-vector space for a certain prime $p$, and $A$ and $B$ are $p$-torsion free. When are $A$ and $B$ isomorphic?

The general answer to this question is related with the notion of \emph{minimal abelian group}, i.e. abelian groups which are isomorphic to every finite index subgroup (see \cite{Oh05}). In Theorem 3.23 there it is described an abelian group of infinite rank for which the statement does not hold: the Baer-Specker group $\prod_{{\aleph}_0}\mathbb{Z}$, that for every prime $p$, has a subgroup of index $p$ which is not isomorphic to it. M. Dugas communicated to the authors a counterexample with finite rank: let $A$ and $B$ subgroups of $\mathbb{Q}$ such that $1/p$ does not belong to $A$ neither to $B$, and such that $Hom(A,B)=Hom(B,A)=0$. Then consider the subgroup of $\mathbb{Q}\oplus\mathbb{Q}$ given by $G=(A(1,0)\oplus B(0,1))+1/p(1,1)$. Then $G/(A(1,0)\oplus B(0,1))=\Z /p$, but $G$ and $A(1,0)\oplus B(0,1)$ are not isomorphic.

Hence, it will probably not be possible to always describe the kernel of the cellularization as a quotient of the Schur multiplier; in this sense, it is interesting to recall that every abelian group can be realized as the Schur multiplier of a $p$-generated perfect group (Theorem 1.1 in \cite{BeMa07}). However, the desired description of the kernel will be available in most usual cases, and this will be the subject of the rest of the section. Let us deal first with the free abelian case.

\begin{lem}

Let $A\rightarrow B\rightarrow C$ an extension of abelian groups, with $B$ free and $C$ torsion. Then $A$ and $B$ are isomorphic.

\end{lem}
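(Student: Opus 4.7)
The plan is to exploit two classical facts about abelian groups: first, that every subgroup of a free abelian group is free abelian (valid at arbitrary rank), and second, that two free abelian groups are isomorphic if and only if their ranks, in the sense of $\dim_{\mathbb{Q}}(-\otimes_{\mathbb{Z}}\mathbb{Q})$, coincide.

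First I would observe that since $A$ embeds in the free abelian group $B$, the group $A$ is itself free abelian. Next, I would tensor the short exact sequence $0\to A\to B\to C\to 0$ with $\mathbb{Q}$. Since $\mathbb{Q}$ is a flat $\mathbb{Z}$-module the resulting sequence
\[
0\to A\otimes\mathbb{Q}\to B\otimes\mathbb{Q}\to C\otimes\mathbb{Q}\to 0
\]
is exact, and the torsion hypothesis on $C$ forces $C\otimes\mathbb{Q}=0$. Therefore the inclusion $A\hookrightarrow B$ induces an isomorphism $A\otimes\mathbb{Q}\cong B\otimes\mathbb{Q}$ of $\mathbb{Q}$-vector spaces, so $A$ and $B$ have the same rank.

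Finally, since two free abelian groups of the same (possibly infinite) rank are isomorphic, we conclude $A\cong B$. The only subtle point is the infinite-rank case, but both the fact that subgroups of free abelian groups of arbitrary rank are free abelian and the classification of free abelian groups by cardinality of a basis are standard, so no real obstacle arises; the whole argument is essentially a bookkeeping computation with ranks.
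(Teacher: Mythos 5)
Your proof is correct and follows essentially the same route as the paper's: subgroups of free abelian groups are free, rank is additive in exact sequences (which you justify explicitly by tensoring with the flat module $\mathbb{Q}$ and noting $C\otimes\mathbb{Q}=0$), and free abelian groups are classified by rank. The only difference is that you spell out the rank-additivity step in more detail than the paper does.
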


\begin{proof}

As $A$ is a subgroup of an free abelian group, it is itself free abelian. Moreover, the rank of abelian groups is additive in exact sequences. Then, as the rank of a torsion group is zero, $\textrm{rk }A=\textrm{rk }B$. But two free abelian groups are isomorphic if and only if they have the same rank, so we are done.

\end{proof}

Now we have the following:

\begin{prop}
\label{decom}
Let $G$ be a $p$-generated group such that $H_2G=F\oplus T$, being $F$ free abelian and $T$ torsion; then $G\in \mathcal{C}_p$.

\end{prop}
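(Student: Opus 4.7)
The plan is to verify the hypothesis of Proposition \ref{H2torsionfree}. Since $H_2G=F\oplus T$ by assumption, the quotient $H_2G/TH_2G$ is already (isomorphic to) the free abelian group $F$, so the only thing left to check is that $\bar{F}:=H_2C/TH_2C$ is also isomorphic to $F$ as an abstract abelian group.

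First I would extract, exactly as in the proof of Proposition \ref{H2torsionfree}, the Mayer--Vietoris short exact sequence $0\to H_2G\to H_2C\to W\to 0$ associated with Chach\'olski's cofibration for $BG$, where $W$ is an $\mathbb{F}_p$-vector space. A straightforward diagram chase shows that $H_2G\cap TH_2C=TH_2G$, so the map $F=H_2G/TH_2G\to \bar{F}$ induced by the inclusion is injective, and its cokernel $\bar{F}/F$ is a quotient of $W$, hence again an $\mathbb{F}_p$-vector space.

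The key observation is now elementary: because $\bar{F}/F$ has exponent $p$, one has $p\bar{F}\subseteq F$, while the torsion-freeness of $\bar{F}$ means that multiplication by $p$ provides an isomorphism $\bar{F}\cong p\bar{F}$. I would then apply the preceding lemma to the short exact sequence $0\to p\bar{F}\to F\to F/p\bar{F}\to 0$: its middle term $F$ is free abelian by hypothesis, and the cokernel $F/p\bar{F}$ embeds into $\bar{F}/p\bar{F}\cong \bar{F}\otimes\mathbb{F}_p$, hence is torsion. The lemma then gives $p\bar{F}\cong F$, and composing with $\bar{F}\cong p\bar{F}$ yields $\bar{F}\cong F$. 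Proposition \ref{H2torsionfree} then finishes the argument.

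I do not anticipate a serious obstacle here. The only mildly delicate point is that $F$ (and hence $\bar{F}$) may have arbitrary, possibly uncountable, rank, so one cannot rely on finite-rank structure theorems; but this is precisely what the preceding lemma was designed to handle, since its proof is simply a rank-count inside a free abelian group, valid for any cardinality.
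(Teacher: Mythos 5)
Your argument is correct and follows essentially the same route as the paper: reduce via Lemma \ref{p'-torsion} and Proposition \ref{H2iso} (equivalently, Proposition \ref{H2torsionfree}) to comparing $F$ with $L=H_2C/TH_2C$, observe that $pL\subseteq F$ and that multiplication by $p$ is injective on the torsion-free group $L$, and finish with the rank count encapsulated in the preceding lemma. The only (harmless) difference is that you apply that lemma to the extension $pL\rightarrow F\rightarrow F/pL$, whereas the paper first deduces that $L$ is free abelian from $L\simeq pL\subseteq F$ and then counts ranks in the extension $F\rightarrow L\rightarrow V$; both yield $F\simeq L$.
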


\begin{proof}

According to Lemma \ref{p'-torsion}, $H_2G/T_pH_2G$ is isomorphic to $H_2C/T_pH_2C$ if and only if $H_2G/TH_2G$ is isomorphic to $H_2C/TH_2C$, and these two groups are free abelian in this case. Again by the Mayer-Vietoris exact sequence of Chach\'olski cofibration, we have an extension $F\rightarrow H_2C/TH_2C\rightarrow V$, with $V$ and $\mathbb{F}_p$-vector space. Hence, by Proposition \ref{H2iso} we only need to prove that $F$ and $H_2C/TH_2C$ are isomorphic.
For brevity, we denote $L=H_2C/TH_2C$ here.

Let us check first that $L$ is free abelian. Observe that for every element $x$ of $L$, $px$ belongs to $F$. Hence, $pL\subseteq F$, and it is free abelian, because it is a subgroup of a free abelian group. But the multiplication by $p$ is an isomorphism over the image for torsion-free abelian groups, so $L\simeq pL$ and hence it is free abelian. Now by additivity of the rank, $\textrm{rk }F=\textrm{rk }L$, and we are done.
\end{proof}

\begin{cor}
\label{fg}
If $G$ is $p$-generated and $H_2G$ is finitely generated, then $G\in\mathcal{C}_p$.
\end{cor}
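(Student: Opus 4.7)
The plan is to reduce this corollary directly to Proposition \ref{decom}. Since $H_2G$ is by hypothesis a finitely generated abelian group, the classical structure theorem for finitely generated abelian groups gives a decomposition $H_2G \simeq F \oplus T$, where $F$ is a finitely generated free abelian group (namely $\mathbb{Z}^r$ for $r$ the torsion-free rank of $H_2G$) and $T$ is a finite torsion abelian group. This decomposition is precisely the hypothesis needed to invoke Proposition \ref{decom}.

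Thus the whole argument is: fix $G$ a $p$-generated group with $H_2G$ finitely generated; apply the structure theorem to obtain $H_2G \simeq F \oplus T$ with $F$ free abelian and $T$ torsion; apply Proposition \ref{decom} to conclude that $G \in \mathcal{C}_p$. There is no real obstacle here, since all the homological and cellular work has already been absorbed into Proposition \ref{decom} (which in turn relies on Proposition \ref{H2iso}, Lemma \ref{p'-torsion}, and the Mayer--Vietoris sequence of Chach\'olski's cofibration). The corollary is essentially a convenient repackaging highlighting that the finite-generation hypothesis on the Schur multiplier automatically delivers the required splitting into free and torsion summands.

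If anything, the only subtlety worth mentioning is that one does \emph{not} need $H_2G$ to be finite, nor to control its $p$-torsion: Lemma \ref{p'-torsion} has already eliminated the role of the $p'$-torsion, and Proposition \ref{decom} only requires the torsion-free quotient to be free abelian of the correct rank, which is automatic here. Thus the proof reduces to a single sentence invoking Proposition \ref{decom} after the structure theorem.
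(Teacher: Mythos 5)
Your proof is correct and is exactly the argument the paper intends: the corollary is stated without proof immediately after Proposition \ref{decom}, because the structure theorem for finitely generated abelian groups supplies the splitting $H_2G\simeq F\oplus T$ with $F$ free abelian and $T$ torsion, and Proposition \ref{decom} then applies verbatim. Nothing is missing.
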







 Observe that if $G$ is finitely presented, $H_2G$ is finitely generated, and we are under the hypothesis of the previous corollary. In fact, we are able to identify the kernel of the cellular approximation for a far more general class of groups that was introduced by Bartholdi in \cite{Ba03}:

 \begin{defn}

 Let $S$ be an alphabet, $F_S$ the free group in $S$ and $\Phi$ a set of endomomorphisms of $F_S$. An $L$-\emph{presentation} is an expression $\langle S|Q|\Phi|R\rangle$, where $Q$ and $R$ are sets of reduced words in $F_S$. When $S$, $Q$, $\Phi$ and $R$ are finite sets the $L$-presentation is said $finite$. A group $G$ is $L$-presented if there exists an $L$-presentation $\langle S|Q|\Phi|R\rangle$ such that $$G=F_S/\langle Q\cup\bigcup_{\phi\in\Phi^*}\phi(R) \rangle^{\#}. $$ Here $\#$ denotes normal closure and $\Phi^*$ is the monoid generated by $\Phi$.
  \end{defn}

As stated in Proposition 2.6 of \cite{Ba03}, the class of finitely $L$-presented groups contains strictly the family of finitely presented groups, as well as for example free Burnside groups, free solvable groups of finite rank and some instances of branch groups. The following result proves that for these groups is possible to compute explicitly the kernel of $cell_{\Z /p}G\rightarrow S_pG$:

\begin{prop}

Every $p$-generated finitely $L$-presented group belongs to $\mathcal{C}_p$.

\end{prop}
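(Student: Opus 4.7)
The plan is a continuity argument: promote the finitely presented case (Corollary \ref{fg}) to finitely $L$-presented groups by expressing $G$ as a filtered colimit of finitely presented groups and passing to the limit.

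First, I would use the finite $L$-presentation $G=\gen{S\mid Q\mid\Phi\mid R}$ to write $G$ as a filtered colimit of finitely presented groups. Enumerate $\Phi^*$ by word length and set
\[
G_n=F_S/\normgen{Q\cup\bigcup_{|w|\le n}w(R)}.
\]
Each $G_n$ is finitely presented and the natural surjections $G_n\twoheadrightarrow G_{n+1}$ give $G=\mathrm{colim}_n G_n$, since quotienting $F_S$ by the union of the normal closures recovers the original defining subgroup.

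Next, continuity of group homology under filtered colimits yields $H_2G=\mathrm{colim}_n H_2G_n$ with each $H_2G_n$ finitely generated abelian; since $T_p$ commutes with filtered colimits we also get $H_2G/T_pH_2G=\mathrm{colim}_n(H_2G_n/T_pH_2G_n)$. By Corollary \ref{fg}, each $G_n$ (or, after the reduction via the \emph{Crucial fact}, its $p$-socle $S_pG_n$) belongs to $\mathcal{C}_p$ with kernel exactly $H_2G_n/T_pH_2G_n$. The goal is then to verify the hypothesis of Proposition \ref{H2iso} for $G$ by recognising the exact sequence $0\to H_2G\to H_2C\to W\to 0$ from the Mayer--Vietoris sequence of the Chach\'olski cofibration as the filtered colimit of the analogous sequences $0\to H_2G_n\to H_2C_n\to W_n\to 0$, and concluding $H_2G/T_pH_2G\simeq H_2C/T_pH_2C$ by passing to the limit.

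The main obstacle lies in this last matching. Two points interact: (a) the approximations $G_n$ need not themselves be $p$-generated although $G$ is, which forces us to work with $S_pG_n$ and use that $cell_{\Z/p}G_n\simeq cell_{\Z/p}S_pG_n$; and (b) the Chach\'olski cofibre $C$ for $BG$ involves a wedge indexed by the (typically enormous) set of pointed homotopy classes $[M,BG]_*=\mathrm{Hom}(\Z/p,G)$, and showing that this wedge is, up to the necessary homotopical data, a filtered colimit of the corresponding wedges for the $BG_n$ requires a cofinality argument: every order $p$ element of $G$ lifts to an order $p$ element of some $G_n$, which together with the compactness of $M$ as a domain ensures that $[M,BG]_*=\mathrm{colim}_n[M,BG_n]_*$ in a way compatible with the evaluation maps. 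Once this compatibility is in place, the exactness of filtered colimits in abelian groups turns the verification of Proposition \ref{H2iso}'s hypothesis for $G$ into the (already known) verification for each $G_n$, finishing the argument.
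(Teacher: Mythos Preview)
The paper's proof is a single citation: Bartholdi (Theorem~2.16 in \cite{Ba03}) shows that the Schur multiplier of any finitely $L$-presented group decomposes as a direct sum of a free abelian group and a torsion group, so Proposition~\ref{decom} applies immediately. Your colimit strategy bypasses this structural input, and that is where it breaks.

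There are two genuine gaps. First, Corollary~\ref{fg} concerns $p$-generated groups, and your approximants $G_n$ are not $p$-generated in general. You suggest passing to $S_pG_n$, but $S_pG_n$ need not be finitely presented (nor have finitely generated $H_2$), so Corollary~\ref{fg} gives you nothing there; the ``Crucial fact'' only tells you $cell_{\Z/p}G_n\simeq cell_{\Z/p}S_pG_n$, not that $S_pG_n\in\mathcal{C}_p$. Second, and more seriously, the isomorphisms $H_2G_n/T_pH_2G_n\simeq H_2C_n/T_pH_2C_n$ coming from Proposition~\ref{decom} are \emph{abstract} rank-counting isomorphisms, not induced by the natural map $BG_n\to C_n$. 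Indeed, from the Mayer--Vietoris sequence one has $0\to H_2G_n\to H_2C_n\to W_n\to 0$ with $W_n$ an $\mathbb{F}_p$-vector space, and after killing $p$-torsion the induced map is typically a proper inclusion (think $\Z\xrightarrow{\,p\,}\Z\to\Z/p$). Non-natural isomorphisms do not assemble into a map of diagrams, so there is no way to ``pass to the limit'' as you propose.

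Note also that you cannot recover Bartholdi's splitting $H_2G\simeq F\oplus T$ just from $H_2G=\mathrm{colim}_n H_2G_n$ with each term finitely generated: a filtered colimit of finitely generated abelian groups need not split this way (e.g.\ $\mathbb{Q}$). That theorem really uses the specific shape of an $L$-presentation, and it is the key input the paper invokes.
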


\begin{proof}

In Theorem 2.16 of \cite{Ba03}, it is proved that the Schur multiplier of a finitely $L$-presented can always be decomposed as the direct sum of a torsion group and a free abelian group. The statement is now a direct consequence of Proposition \ref{decom}.

\end{proof}

This result will be used in the following sections to compute explicitly $\Z /p$-cellular approximations of infinite torsion groups.

\begin{rem}

When computing the $\Z /p$-cellular approximation of a group, the usual strategy is to compute first the $p$-socle, and then compute the $\Z /p$-cellularization of the socle, which is the same as the one of the original group. The process must be carried out in this order, as we are not aware of general relations between $H_2G$ and $H_2 S_pG$ that are relevant in our context. This seems to be a good topic to perform ulterior research.

\end{rem}

\section{Homology of torsion groups}

In this section we will identify some big classes of torsion groups whose homology is torsion,  that are then appropriate candidates for computing $cell_{\Z /p}$ using the exact sequence of Proposition \ref{H2iso}. We found it surprising that it seems not to be a general treatment of this problem in the literature.

From now on, let $P$ be a non-empty set of primes. For us, a $P$-torsion group is a group $G$ such that for every $x\in G$, there exists $p\in P$ such that $x$ is $p$-torsion.

\begin{prop}
\label{locallyfinite}
Let $G$ be a locally finite $P$-torsion group. Then its homology groups are $P$-torsion.

\end{prop}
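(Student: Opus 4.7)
The plan is to combine two classical facts about group homology: it commutes with filtered colimits, and for a finite group it is annihilated by the order of the group. Together with Cauchy's theorem these immediately give the result.

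First I would write $G$ as the filtered colimit $G=\varinjlim_\alpha G_\alpha$ of its finitely generated subgroups. By local finiteness each $G_\alpha$ is finite, and since $G$ is $P$-torsion, every element of $G_\alpha$ has order a power of some prime in $P$. By Cauchy's theorem, any prime $q$ dividing $|G_\alpha|$ must be the order of some element of $G_\alpha$, hence $q\in P$; consequently $|G_\alpha|$ is a product of primes in $P$.

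Next I would invoke the classical fact that for any finite group $H$ and any $n\ge 1$, the integral homology $H_n(H;\Z)$ is annihilated by $|H|$ (a consequence of the norm/transfer argument for the trivial subgroup). Applying this to each $G_\alpha$, we conclude that $H_n(G_\alpha;\Z)$ is $P$-torsion for every $n\ge 1$.

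Finally I would use the fact that integral homology of discrete groups commutes with filtered colimits, so $H_n(G;\Z)\cong\varinjlim_\alpha H_n(G_\alpha;\Z)$. It remains to observe that a filtered colimit of $P$-torsion abelian groups is $P$-torsion: every element of the colimit is represented at some finite stage, where it is killed by a product of primes in $P$. There is no real obstacle here; the only thing to be slightly careful about is to restrict to $n\ge 1$, since of course $H_0(G;\Z)=\Z$ is not torsion.
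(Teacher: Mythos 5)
Your proof is correct and follows essentially the same route as the paper: express $G$ as the filtered colimit of its finite subgroups, use that homology commutes with filtered colimits, and that a filtered colimit of $P$-torsion groups is $P$-torsion. The only difference is that you spell out (via Cauchy's theorem and the transfer argument) the first step that the paper simply quotes as known, namely that finite $P$-torsion groups have $P$-torsion homology in positive degrees.
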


\begin{proof}

First, we know that every finite $P$-torsion group has $P$-torsion homology. Then, we use that every group is a filtered colimit of its finitely generated subgroups, and we find that $G$ is a filtered colimit of its finite subgroups. Now, as the homology commutes with filtered colimits and the colimit of a filtered system of $P$-torsion groups is $P$-torsion, the homology of $G$ is $P$-torsion.

\end{proof}


This result opens the door to prove the same for a very interesting class of groups, which was defined by Day \cite{Da57}.

\begin{defn}
The class of \emph{elementary amenable} groups is the smallest class of groups that contains abelian and finite groups and is closed under isomorphisms, subgroups, quotients, extensions and directed unions.
\end{defn}

Chou proved in Section 2 of \cite{Ch80} that the conditions of Day definition concerning subgroups and quotients are redundant. More precisely, and following his notation, let $EG_0$ be the class whose elements are abelian groups and finite groups. If $\alpha$ is a successor ordinal, the groups in $EG_{\alpha}$ are obtained by performing one extension of two elements of $EG_{\alpha-1}$, or a directed union over elements of $EG_{\alpha-1}$. Moreover, if $\alpha$ is a limit ordinal, then $EG_{\alpha}=\bigcup_{\beta<\alpha}EG_{\beta}$. In this way, the class of elementary amenable groups is defined as the union $EG=\bigcup EG_{\alpha}$ for all ordinals $\alpha$. Note in particular that every soluble group is elementary amenable.

Now we can state our main result of this section:

\begin{prop}
\label{EAtorsion}

Let $G$ be a elementary amenable $P$-torsion group. Then its homology groups are $P$-torsion.

\end{prop}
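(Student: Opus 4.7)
The plan is transfinite induction on the Chou hierarchy $\{EG_\alpha\}$, showing that every $P$-torsion group $G\in EG_\alpha$ has $P$-torsion (positive-degree) homology. The class of $P$-torsion abelian groups is closed under filtered colimits, and homology commutes with filtered colimits of groups, so the ``directed union'' clause of the hierarchy is handled for free at every stage; the limit-ordinal clause $EG_\alpha=\bigcup_{\beta<\alpha}EG_\beta$ is equally automatic. Hence the real work is concentrated in the base case and in the extension clause.

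For the base $EG_0$, a finite $P$-torsion group has order equal to a product of primes in $P$ and its positive-degree homology is annihilated by that product, hence $P$-torsion; an abelian $P$-torsion group is locally finite (each finitely generated subgroup is a finite direct sum of cyclic $p$-groups with $p\in P$) and is therefore covered by Proposition \ref{locallyfinite}. For the successor step, suppose $G\in EG_\alpha$ is $P$-torsion and fits into an extension $1\to N\to G\to Q\to 1$ with $N,Q\in EG_{\alpha-1}$. Both $N$ and $Q$ are subquotients of $G$, hence $P$-torsion, so the inductive hypothesis gives that $H_*(N)$ and $H_*(Q)$ are $P$-torsion. I would then invoke the Lyndon--Hochschild--Serre spectral sequence
\[
E^{2}_{s,t}=H_{s}\bigl(Q;H_{t}(N)\bigr)\Longrightarrow H_{s+t}(G)
\]
and argue that every entry of the $E^{2}$-page is $P$-torsion; since the class of $P$-torsion abelian groups is closed under subquotients and extensions, this propagates through the successive $E^{r}$-pages and the filtration on the abutment, forcing $H_{*}(G)$ to be $P$-torsion.

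The one point that takes actual work is the twisted-coefficient entry $H_{s}(Q;M)$ when $M=H_{t}(N)$. For this I would choose a projective resolution $F_{\bullet}\to\Z$ of the trivial module over $\Z[Q]$ and observe that each $F_{s}\otimes_{\Z[Q]}M$ is $P$-torsion as an abelian group, because tensoring any abelian group with a $P$-torsion group produces a $P$-torsion group (for $a\in M$ annihilated by $p\in P$, the element $x\otimes a$ is annihilated by $p$). Hence the entire chain complex computing $H_{*}(Q;M)$ consists of $P$-torsion abelian groups, and so does its homology.

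The main obstacle is exactly this twisted-coefficient step, together with keeping careful track of $P$-torsion across the spectral sequence: the coefficient modules $H_{t}(N)$ need not be finitely generated, so one cannot reduce to coefficients of the form $\Z/p^{k}$ and must argue at the chain level as above. Once that is in place, the induction is bookkeeping.
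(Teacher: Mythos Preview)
Your argument is correct and is essentially the Serre--class/spectral-sequence route the paper explicitly mentions as an alternative in the remark following Corollary~\ref{CellEA}. The paper's own proof, however, is a one-liner: by Chou's Theorem~2.3 in \cite{Ch80}, every torsion elementary amenable group is locally finite, and then Proposition~\ref{locallyfinite} applies directly. So the paper reduces the statement to a known structural result about elementary amenable groups, bypassing the transfinite induction and the LHS spectral sequence entirely.

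What each approach buys: the paper's route is much shorter and isolates the group-theoretic content (local finiteness) from the homological content (Proposition~\ref{locallyfinite}). Your route is self-contained and does not rely on Chou's local-finiteness theorem; it also makes transparent that the class of $P$-torsion abelian groups is a Serre class, which is a reusable observation. One minor caveat in your write-up: the paper's definition of ``$P$-torsion'' (each element is $p$-torsion for some $p\in P$) should, for abelian groups, be read as ``order a product of primes in $P$'' so that the class is closed under finite sums and extensions; with that standard reading your chain-level argument for $F_{s}\otimes_{\Z[Q]}M$ goes through (indeed for free $F_{s}$ this tensor is just a direct sum of copies of $M$).
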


\begin{proof}  It is enough to check that every torsion elementary amenable group is locally finite, and this is true by Theorem 2.3 in \cite{Ch80}. So we are done.

\end{proof}

This result allows to describe the $\Z /p$-cellular structure of the elementary amenable torsion groups:

\begin{cor}
\label{CellEA}

If $G$ is a $p$-generated elementary amenable $p$-torsion group for some prime $p$, then $G\in\mathcal{C}_p$. In particular, every $p$-torsion and $p$-generated elementary amenable group is $\Z /p$-cellular.

\end{cor}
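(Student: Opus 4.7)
The plan is to assemble the corollary directly from Proposition \ref{EAtorsion} and Proposition \ref{torsion}, which together already do almost all of the work. First, given a $p$-generated elementary amenable $p$-torsion group $G$, I would specialize Proposition \ref{EAtorsion} to the singleton set $P=\{p\}$ to conclude that every homology group $H_nG$ is $p$-torsion; in particular $H_2G$ is a torsion abelian group. Since $G$ is also $p$-generated, Proposition \ref{torsion} applies and yields $G\in\mathcal{C}_p$, which gives the first assertion of the corollary with no further work.

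For the ``in particular'' clause, I would unpack what membership in $\mathcal{C}_p$ means: the kernel of the augmentation $cell_{\Z/p}G\to G$ is isomorphic to $H_2G/T_pH_2G$. Because every element of $H_2G$ has $p$-power order, $T_pH_2G=H_2G$, and hence this kernel is trivial. Combined with the surjectivity of $cell_{\Z/p}G\to S_pG=G$ (coming from the Chach\'olski extension $K\to cell_{\Z/p}G\to S_pG$ discussed in Section~2, together with the hypothesis that $G$ is $p$-generated so that $S_pG=G$), this shows that the augmentation is an isomorphism, i.e.\ $G$ is $\Z/p$-cellular.

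There is no real obstacle here; the only care one needs is in checking that ``$p$-torsion'' is exactly the hypothesis needed to force $T_pH_2G=H_2G$, and that ``$p$-generated'' is what makes the target of the Chach\'olski extension equal to $G$ itself rather than a proper subgroup. Both are immediate from the definitions set up earlier in the paper, so the proof should be essentially a two-line assembly of Propositions \ref{EAtorsion} and \ref{torsion}.
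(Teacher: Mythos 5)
Your proof is correct and follows essentially the same route as the paper: apply Proposition \ref{EAtorsion} (with $P=\{p\}$) to get that $H_2G$ is $p$-torsion, invoke Proposition \ref{torsion} to place $G$ in $\mathcal{C}_p$, and then observe that $H_2G/T_pH_2G$ vanishes so the augmentation kernel is trivial. The extra remark about surjectivity onto $S_pG=G$ is a reasonable explicit touch that the paper leaves implicit, but the argument is the same.
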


\begin{proof}

As $G$ is torsion, its homology groups are so by the previous proposition. Then by Proposition \ref{torsion}, $G$ belongs to $\mathcal{C}_p$. Moreover, if $G$ is $p$-torsion, $H_2G$ is so, and then the kernel of the augmentation $cell_{\Z /p}G\rightarrow G$, which is isomorphic to $H_2G/T_pH_2G$, is trivial. Thus, $G$ is $\Z /p$-cellular.
\end{proof}

\begin{rem}

The proof of Proposition \ref{EAtorsion} we provide, based on local finiteness, is short and concise. It is also possible to prove the previous proposition combining Chou's description of the elementary amenable groups with a clever use of the nullification with regard to $P_M$, or else appealing to Serre class theory.


\end{rem}

Recall that given a class of groups $\mathcal{F}$, the \emph{acyclic class} $\bar{\mathcal{C}}(\mathcal{F})$ generated by $\mathcal{F}$ is the smallest class that contains $\mathcal{F}$ and is closed under colimits and extensions. Let $EA$ be the class of elementary amenable $P$-torsion groups, $LF$ the class of locally finite $P$-torsion groups, and $PG$ the class whose elements are the cyclic groups $C_p$, for $p\in P$.

\begin{prop}

There are bijections $\bar{\mathcal{C}}(PG)\simeq \bar{\mathcal{C}}(EA)\simeq \bar{\mathcal{C}}(\mathcal{LF})$. In particular, every push-out of $P$-torsion elementary amenable and/or locally finite groups has $P$-torsion homology.

\end{prop}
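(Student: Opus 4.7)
The plan is to verify the chain $\bar{\mathcal{C}}(PG) \subseteq \bar{\mathcal{C}}(EA) \subseteq \bar{\mathcal{C}}(LF) \subseteq \bar{\mathcal{C}}(PG)$. First I would observe that $EA = LF$ as classes of $P$-torsion groups: by Chou's Theorem~2.3 in \cite{Ch80} (as already invoked in the proof of Proposition~\ref{EAtorsion}), every torsion elementary amenable group is locally finite, so $EA \subseteq LF$; conversely, every locally finite group is the directed union of its finite subgroups and hence sits in $EG_1 \subseteq EA$. This yields $\bar{\mathcal{C}}(EA) = \bar{\mathcal{C}}(LF)$ immediately, while the evident inclusion $PG \subseteq EA$ provides $\bar{\mathcal{C}}(PG) \subseteq \bar{\mathcal{C}}(EA)$.

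The substantive step is $\bar{\mathcal{C}}(LF) \subseteq \bar{\mathcal{C}}(PG)$, i.e.\ placing every locally finite $P$-torsion group into $\bar{\mathcal{C}}(PG)$. Since a locally finite group is the filtered colimit of its finite subgroups, colimit closure reduces the problem to finite $P$-torsion groups, and for those I would induct on the order via a composition series. When $G$ is solvable, every composition factor is some $C_p$ with $p \in P$, so iterated extension closure starting from $PG$ delivers $G \in \bar{\mathcal{C}}(PG)$ at once. The hard case is a finite $P$-torsion group with a non-abelian simple composition factor $S$ (for example $A_5$ when $\{2,3,5\} \subseteq P$): such an $S$ admits neither a non-trivial normal subgroup nor a non-trivial splitting as an amalgamated free product, and any presentation $S = F/K$ with $F$ a free product of cyclic $p$-subgroups of $S$ produces, by Kurosh together with an Euler characteristic count, a kernel $K$ containing a free factor that obstructs $K \in \bar{\mathcal{C}}(PG)$. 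To circumvent this I would pass to classifying spaces and use a homotopy decomposition of $BS$ as a homotopy colimit of classifying spaces of proper subgroups (which are smaller, and by the inductive hypothesis already in $\bar{\mathcal{C}}(PG)$), then apply $\pi_1$ to realise $S$ as a colimit in the category of groups of entries already in $\bar{\mathcal{C}}(PG)$.

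For the ``in particular'' clause on push-outs the full equality of acyclic classes is not required. Given $G = H_1 *_K H_2$ with $H_1$, $H_2$, and $K$ each $P$-torsion and either elementary amenable or locally finite, Propositions~\ref{EAtorsion} and~\ref{locallyfinite} guarantee that the integral homologies $H_\ast H_1$, $H_\ast H_2$ and $H_\ast K$ are all $P$-torsion, and the Mayer--Vietoris sequence then exhibits each $H_n G$ as an extension of $P$-torsion abelian groups, hence itself $P$-torsion.
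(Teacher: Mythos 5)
Your reductions are sound up to the point where a finite $P$-torsion group has a non-abelian simple composition factor $S$, but there the argument has a genuine gap, and this case really occurs: with the paper's definition of $P$-torsion (every element of prime-power order for some $p\in P$), the simple groups $A_5$, $L_2(7)$, $Sz(8)$, etc.\ are $P$-torsion for suitable $P$. The homotopy decompositions of $BS$ as a homotopy colimit of classifying spaces of proper subgroups (subgroup, centralizer, normalizer decompositions) are only mod-$p$ homology equivalences at a single prime, valid after $p$-completion; they are not integral equivalences, and applying $\pi_1$ does not recover $S$ --- indeed $\pi_1\bigl(BS^{\wedge}_p\bigr)\cong S/O^p(S)$ is trivial for $S$ simple non-abelian. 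Nor is it automatic that $S$ is the colimit, in the category of groups, of the diagram of its proper subgroups: that colimit is an amalgam which merely surjects onto $S$ unless one verifies simple connectivity of the associated coset nerve (Abels--Holz), which you do not do. So the one case your induction was designed to handle is exactly the case left unproved.

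The paper sidesteps the induction entirely: a finite $P$-torsion group $G$ is generated by $P$-torsion elements and its reduced integral homology is annihilated by $|G|$, all of whose prime divisors lie in $P$ by Cauchy's theorem; hence $BG$ is $M_P$-acyclic, and by the characterization of acyclic classes of groups via $M_P$-acyclicity in \cite{RoSc01} (Theorems 6.1--6.2) this is equivalent to $G\in\bar{\mathcal{C}}(PG)$. This treats solvable and non-solvable finite $P$-torsion groups uniformly, and colimit closure then gives the locally finite case as you intended. Two smaller remarks: your observation that $EA=LF$ (Chou in one direction, directed unions of finite subgroups in the other) is correct and slightly cleaner than the chain of inclusions the paper uses; but your Mayer--Vietoris derivation of the push-out statement is only valid when the maps $K\to H_i$ are injective, since otherwise $B(H_1\ast_K H_2)$ need not be the homotopy push-out of the classifying spaces --- the general push-out statement is instead a formal consequence of closure of $\bar{\mathcal{C}}(PG)$ under colimits together with the fact that every group in $\bar{\mathcal{C}}(PG)$ has $P$-torsion homology.
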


\begin{proof}

As $PG\subset EA\subset LF$, so $\bar{\mathcal{C}}(PG)\subset \bar{\mathcal{C}}(LF)$. We have to check the other inequality.  Let $M_P$ be a wedge $\bigvee_{p\in P}M(\Z /p,1)$ of two-dimensional Moore spaces. If $G$ is a $P$-torsion finite group, its $M_P$-nullification is trivial. Hence, $G$ belongs to $PG$. Now every locally finite $P$-torsion group is a colimit of finite $P$-torsion groups. As the acyclic classes are closed under colimits, we have $\bar{\mathcal{C}}(LF)\subset \bar{\mathcal{C}}(PG)$ and we are done.

On the other hand, we have proved that the homology of every group in $EA$ is $P$-torsion. Using the same argument of Theorem 6.2 in \cite{RoSc01}, it is clear that the classifying space of every $P$-torsion group in $EA$ is $M_P$-acyclic, and this implies that the group belongs to $\bar{\mathcal{C}}(\mathcal{P})$, and in particular has $\mathcal{P}$-torsion homology.


\end{proof}

\begin{rem} Observe in particular that the push-out operation produces examples of groups whose homology is $P$-torsion but they are not $P$-torsion themselves, as for example the free product $C_p\ast C_p$ for $p\in P$. So, the closeness of these classes under arbitrary push-outs gives an easy way to produce groups with $P$-torsion homology which are not $P$-torsion themselves.
\end{rem}


Let us now concentrate in the case in which $P$ has only one prime $p$. So far, all the classifying spaces of the $p$-torsion groups we have dealt with in this section are $M$-acyclic. In particular, this implies that the homology of these groups is $p$-torsion, and in particular the groups are $\mathbb{Z}/p$-cellular. Now we will review some examples of $p$-torsion groups whose classifying spaces are \emph{not} $M$-acyclic, and hence not $M$-cellular. These examples are not amenable, so the next question seems really interesting:

\textbf{Question:} Is the classifying space of every amenable $p$-torsion group $M$-acyclic?

The answer to this question can be quite difficult, because so far no constructive scheme is available to obtain the class of amenable groups out of the class of elementary amenable groups. It is also remarkable that, unlike what happens in the elementary amenable case, there exist finitely generated $p$-torsion amenable groups that are infinite. Among these ones, the most famous examples are probably the first Grigorchuk group \cite{Gr84} and the Gupta-Sidki 3-group \cite{GuSi83}. Our methods allow to compute their relevant cellular approximations:

\begin{prop}

The first Grigorchuk group $\mathfrak{G}$ is $\Z /2$-cellular, and the $3$-torsion Gupta-Sidki group $\bar{\bar{\Gamma}}$ is $\Z /3$-cellular.

\end{prop}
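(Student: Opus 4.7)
The plan is to reduce both statements to the same machine: prove that each group $G$ lies in $\mathcal{C}_p$ (for the appropriate prime), and then show that the quotient $H_2G/T_pH_2G$ which describes the kernel of $cell_{\Z/p}G\to G$ is in fact trivial. If both hold, then the augmentation $cell_{\Z/p}G\to G$ is an isomorphism, which is exactly the assertion of $\Z/p$-cellularity.

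First I would check the \emph{$p$-generated} hypothesis. The first Grigorchuk group $\mathfrak{G}$ is generated by four involutions (Grigorchuk's standard generators $a,b,c,d$, with $bcd=1$), so every generator has order $2$ and $S_2\mathfrak{G}=\mathfrak{G}$. Similarly the Gupta--Sidki group $\bar{\bar{\Gamma}}$ is generated by two elements of order $3$, so $S_3\bar{\bar{\Gamma}}=\bar{\bar{\Gamma}}$. Next I would invoke membership in $\mathcal{C}_p$ via the $L$-presentation route: Lysenok's finite $L$-presentation of $\mathfrak{G}$ and Bartholdi's finite $L$-presentation of $\bar{\bar{\Gamma}}$ (both recorded in \cite{Ba03}) place these groups inside the class of finitely $L$-presented groups, so the preceding proposition yields $\mathfrak{G}\in\mathcal{C}_2$ and $\bar{\bar{\Gamma}}\in\mathcal{C}_3$. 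Consequently the kernel of $cell_{\Z/2}\mathfrak{G}\to\mathfrak{G}$ is $H_2\mathfrak{G}/T_2H_2\mathfrak{G}$, and similarly for $\bar{\bar{\Gamma}}$.

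The second (and only nontrivial) input is then that $H_2\mathfrak{G}$ is $2$-torsion and $H_2\bar{\bar{\Gamma}}$ is $3$-torsion. For the Grigorchuk group the Schur multiplier has been shown (by Grigorchuk, and reproved via the $L$-presentation by Bartholdi) to be an infinite elementary abelian $2$-group; for the Gupta--Sidki group the analogous computation yields an infinite elementary abelian $3$-group. In both cases $H_2G=T_pH_2G$, so the quotient $H_2G/T_pH_2G$ is zero, and the kernel of the augmentation vanishes.

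Combining the three steps: since $\mathfrak{G}\in\mathcal{C}_2$ and $H_2\mathfrak{G}/T_2H_2\mathfrak{G}=0$, the map $cell_{\Z/2}\mathfrak{G}\to\mathfrak{G}$ is an isomorphism, so $\mathfrak{G}$ is $\Z/2$-cellular; the same argument with $p=3$ gives that $\bar{\bar{\Gamma}}$ is $\Z/3$-cellular. The main obstacle is not the formal machinery but the appeal to the homology computations: one really needs the external fact that the Schur multipliers of these branch groups are $p$-torsion (they are infinite, so there is no general finiteness shortcut), and this is precisely where the finite $L$-presentation is crucial because it is what permits the computation of $H_2$ via the $L$-presentation Hopf-type formula.
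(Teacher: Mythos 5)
Your proposal is correct and follows essentially the same route as the paper: check $p$-generation, establish membership in $\mathcal{C}_p$, and then observe that the Schur multipliers (countably infinite elementary abelian $2$- and $3$-groups, by Grigorchuk and Bartholdi respectively) are $p$-torsion, so the kernel $H_2G/T_pH_2G$ vanishes. The only cosmetic difference is that the paper deduces $\mathcal{C}_p$-membership by applying Proposition \ref{decom} directly to the known structure of $H_2$, whereas you route through the finitely-$L$-presented proposition; since the explicit computation of $H_2$ as an $\mathbb{F}_p$-vector space is needed for the final step anyway, that detour is valid but redundant.
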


\begin{proof}

First, recall that $\mathfrak{G}$ is $2$-generated and $\bar{\bar{\Gamma}}$ is 3-generated, so we must check that the kernels of the corresponding augmentations $cell_{\Z /2}\mathfrak{G}\rightarrow \mathfrak{G}$ and $cell_{\Z /3}\bar{\bar{\Gamma}}\rightarrow\bar{\bar{\Gamma}}$ are trivial. According to Section 4 in \cite{Ba03}, the Schur multipliers of these groups are $\mathbb{F}_p$-vector spaces on a countable number of generators, with $p=2$ for $\mathfrak{G}$ and $p=3$ for $\bar{\bar{\Gamma}}$ (the original computation for $\mathfrak{G}$ can be found in \cite{Gr99}). Then by Proposition \ref{decom}, $\mathfrak{G}\in\mathcal{C}_2$ and $\bar{\bar{\Gamma}}\in\mathcal{C}_3$. Hence, as $H_2\mathfrak{G}$ and $H_2\bar{\bar{\Gamma}}$ are respectively 2-torsion and 3-torsion, the aforementioned kernels are trivial, and the groups are cellular.

\end{proof}

Observe that the result implies that these groups can be constructed out of the corresponding $\mathbb{Z}/p$ by means of iterated telescopes and push-outs. However, as they are not elementary amenable, they cannot be constructed out of these finite cyclic groups by extensions and directed unions. This proves in particular that the push-out operation marks a difference between amenable and elementary amenable groups.

On the other hand, we do not know any example of any amenable non-elementary amenable $p$-torsion group such that its Schur multiplier is not $p$-torsion. The examples of this class for which the Schur multiplier is known are residually nilpotent, and the Schur multiplier is $p$-torsion (although not in general finitely generated). It would be interesting to know if every residually nilpotent infinite $p$-group has a $p$-torsion Schur multiplier.

\section{Torsion in the homotopy}

In \cite{Ch08}, Emmanuel Farjoun proposes an interesting list of conjectures that concern localizations and cellular approximation. In this section we exhibit counterexamples of Conjecture 8 of the list, that asks if given a space whose homotopy groups are $p$-torsion, the same condition holds for their localizations or cellularizations. We notice that a counterexample for the second part based on a Tarski monster and a different cellularization can be extracted from recent work of G\"{o}bel \cite{Go12} and Petapirak \cite{Pe16} about varieties of groups.

It must be remarked that it is not complicated to find examples of idempotent functors and classes of spaces for which Farjoun's statement holds. This is clear for the $n$-connected cover functor or the Postnikov towers, for instance. If we look for more sophisticated and mod $p$ meaningful examples, we have that if $X$ is any simply-connected space whose homotopy groups are finitely generated $p$-groups, same holds for the homological localization $H\mathbb{Z}/p (X)$  \cite{Bo75}, and also for the nullification with respect to any Moore space $M(\mathbb{Z}/p,n)$ for every $n\geq 1$ \cite{Bo97}. In the augmented case, the statement holds again for any $X$ and the functor $cell_{M(\mathbb{Z}/p,n)}$ \cite{Ch96}, and for $cell_{B\mathbb{Z}/p}$ and $X=K(P,n)$, with $n\geq 1$ and $P$ a nilpotent $p$-torsion group \cite{CFFS15}.

A counterexample for the coaugmented case can be deduced from Bousfield's work.

\begin{prop}

If $X$ is the classifying space of the Pr\"{u}fer group $\mathbb{Z}/p^{\infty}$, the $H\mathbb{Z}/p$-homological localization of $X$ has the homotopy type of $K(\mathbb{Z}^{\wedge}_p,2)$.

\end{prop}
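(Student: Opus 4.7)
The plan is to reduce the computation to that of $K(\mathbb{Z},2)$, exploiting the short exact sequence of abelian groups
\[
0\longrightarrow\mathbb{Z}\longrightarrow\mathbb{Z}[1/p]\longrightarrow\mathbb{Z}/p^{\infty}\longrightarrow 0,
\]
whose delooping yields a principal fibration
\[
K(\mathbb{Z}[1/p],1)\longrightarrow B\mathbb{Z}/p^{\infty}\longrightarrow K(\mathbb{Z},2).
\]

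First I would verify that the fiber $K(\mathbb{Z}[1/p],1)$ is $H\mathbb{Z}/p$-acyclic. Writing $\mathbb{Z}[1/p]$ as the filtered colimit of $\mathbb{Z}\xrightarrow{p}\mathbb{Z}\xrightarrow{p}\cdots$, and using that group homology commutes with filtered colimits, the problem reduces to showing that multiplication by $p$ acts as zero on $H_{*}(\mathbb{Z};\mathbb{F}_{p})$ in positive degrees. The only non-vanishing positive-degree group is $H_{1}(\mathbb{Z};\mathbb{F}_{p})=\mathbb{F}_{p}$, on which multiplication by $p$ is manifestly trivial, and the claim follows.

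Next I would compare $B\mathbb{Z}/p^{\infty}$ with $K(\mathbb{Z},2)$ via the mod $p$ Serre spectral sequence of the fibration above. Since $K(\mathbb{Z},2)$ is simply connected, there is no monodromy issue, and the vanishing of the positive-degree mod $p$ homology of the fiber forces the $E_{2}$-page to be concentrated on the row $q=0$. The edge homomorphism then shows that the projection induces an isomorphism $H_{*}(B\mathbb{Z}/p^{\infty};\mathbb{F}_{p})\cong H_{*}(K(\mathbb{Z},2);\mathbb{F}_{p})$, so the projection is an $H\mathbb{Z}/p$-equivalence.

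Since $L_{H\mathbb{Z}/p}$ sends $H\mathbb{Z}/p$-equivalences to homotopy equivalences, we conclude $L_{H\mathbb{Z}/p}B\mathbb{Z}/p^{\infty}\simeq L_{H\mathbb{Z}/p}K(\mathbb{Z},2)$. The right-hand side is the classical mod $p$ homological localization of the simply connected Eilenberg--MacLane space $K(\mathbb{Z},2)$, computed by Bousfield in \cite{Bo75} to be $K(\mathbb{Z}_{p}^{\wedge},2)$. The only point that is not entirely formal is the mod $p$ acyclicity of $K(\mathbb{Z}[1/p],1)$; once this is in hand, everything else is a standard application of the Serre spectral sequence together with Bousfield's result.
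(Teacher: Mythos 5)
Your argument is correct, but it follows a genuinely different route from the paper's. The paper works directly on $X=B\mathbb{Z}/p^{\infty}$: it notes that $H\mathbb{Z}/p$-localization agrees with Bousfield--Kan $p$-completion on $p$-good spaces, that $X$ is $p$-good because its fundamental group is abelian, and then reads off the homotopy of $X^{\wedge}_p$ from the exact sequence of \cite{BK72}, VI.5.1, namely $0\to\mathrm{Ext}(\mathbb{Z}/p^{\infty},\pi_n)\to\pi_n(X^{\wedge}_p)\to\mathrm{Hom}(\mathbb{Z}/p^{\infty},\pi_{n-1})\to 0$; since $\mathbb{Z}/p^{\infty}$ is divisible the $\mathrm{Ext}$ term vanishes and $\mathrm{Hom}(\mathbb{Z}/p^{\infty},\mathbb{Z}/p^{\infty})=\mathbb{Z}^{\wedge}_p$ lands in degree $2$. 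You instead shift the computation onto $K(\mathbb{Z},2)$ by exhibiting the projection $B\mathbb{Z}/p^{\infty}\to K(\mathbb{Z},2)$ as a mod $p$ homology equivalence, using the elementary and pleasant observation that $\mathbb{Z}[1/p]=\mathrm{colim}(\mathbb{Z}\xrightarrow{p}\mathbb{Z}\xrightarrow{p}\cdots)$ has trivial reduced mod $p$ homology; the Serre spectral sequence step and the final appeal to $L_{H\mathbb{Z}/p}K(\mathbb{Z},2)\simeq K(\mathbb{Z}^{\wedge}_p,2)$ are both sound (the base is simply connected, so there is no coefficient-system issue). What your approach buys is that the only non-formal homological input is completely elementary; what it costs is that you still need Bousfield's identification of $L_{H\mathbb{Z}/p}K(\mathbb{Z},2)$, which is itself the statement that $\mathrm{Ext}(\mathbb{Z}/p^{\infty},\mathbb{Z})=\mathbb{Z}^{\wedge}_p$ in disguise --- so the two proofs ultimately invoke the same derived-functor computation, just packaged at different points. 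The paper's version is shorter because it skips the comparison fibration entirely.
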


\begin{proof}

It is known that the $H\mathbb{Z}/p$-localization coincides con Bousfield-Kan $p$-completion over $p$-good spaces, so in this case is enough to compute $X^{\wedge}_p$. The space $X$ is $p$-good, because it is the classifying space of an abelian group. Then, we can use the exact sequence of \cite{BK72}, VI.5.1. As $Hom(\mathbb{Z}/p^{\infty},\mathbb{Z}/p^{\infty})=\mathbb{Z}^{\wedge}_p$ the result follows.

\end{proof}

Now let $B(2,p)$ the free Burnside group in two generators, for $p>665$. This is a group of exponent $p$, and in fact it is the free group in the variety of groups of exponent $p$. It is a famous result by Ol'shanskii (Corollary 31.2 in \cite{Ol91}) that the Schur multiplier of this group is a free abelian group in a countable number of generators. This is the key result for our following counterexample. Recall that $M$ is a two dimensional Moore space for $\Z /p$.

\begin{prop}

Let $G$ be $B(2,p)$, for $p>665$. Then the fundamental group of $cell_MBG$ is not $p$-torsion.

\end{prop}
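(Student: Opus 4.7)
The plan is to apply Proposition \ref{decom} directly, using Ol'shanskii's computation of $H_2 B(2,p)$. Since $B(2,p)$ has exponent $p$, every non-identity element has order $p$, so in particular the two standard generators have order $p$ and generate the whole group; hence $S_p G = G$ and $G$ is $p$-generated. By Ol'shanskii's result (Corollary 31.2 in \cite{Ol91}) cited just before the statement, $H_2 G$ is free abelian of countably infinite rank. In the notation of Proposition \ref{decom}, this corresponds to the decomposition $H_2 G = F \oplus T$ with $F$ this infinite-rank free abelian group and $T = 0$, so $G \in \mathcal{C}_p$.

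By definition of $\mathcal{C}_p$, the kernel of $cell_{\Z/p} G \to G$ is then $H_2 G / T_p H_2 G$. Since $H_2 G$ is free abelian, $T_p H_2 G = 0$, so this kernel is $H_2 G$ itself, a non-trivial free abelian group of infinite rank. In particular this kernel contains elements of infinite order, so $cell_{\Z/p} G$ is not a $p$-torsion group.

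To translate this back into the topological statement, I would invoke the standard fact (from the Rodr\'iguez--Scherer theory recalled in Section 3, and already used implicitly throughout the paper) that $\pi_1(cell_M BG) \cong cell_{\Z/p} G$ when $M$ is a two-dimensional Moore space for $\Z/p$. This follows because the construction of the group-theoretic cellularization with respect to $\Z/p$ is derived from the topological $M$-cellularization of $BG$: the Chach\'olski fibration $cell_M BG \to BG \to P_{\Sigma M} C$ has connected base (since $\Sigma M$ is simply connected, $P_{\Sigma M} C$ is at least $1$-connected in the relevant range), so taking $\pi_1$ yields the central extension $K \to cell_{\Z/p} G \to S_p G$ that defines the group-level cellularization. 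Combining these, $\pi_1(cell_M BG)$ contains the infinite-rank free abelian subgroup $H_2 G$ and is therefore not $p$-torsion.

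The main obstacle, insofar as there is one, is simply to make the identification $\pi_1(cell_M BG) \cong cell_{\Z/p} G$ cleanly; everything else is a direct application of Proposition \ref{decom} together with Ol'shanskii's theorem, and the free-abelian-of-countable-rank conclusion for the kernel is then immediate. No new computation is needed.
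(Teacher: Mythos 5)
Your proof is correct, and the overall strategy coincides with the paper's: show $G\in\mathcal{C}_p$, observe that the kernel of $cell_{\Z/p}G\to G$ is then the torsion-free group $H_2G$, which is nontrivial by Ol'shanskii, and transfer this to $\pi_1 cell_M BG$ via the Rodr\'iguez--Scherer identification. The one genuine difference is how membership in $\mathcal{C}_p$ is established. You apply Proposition \ref{decom} directly, taking $H_2G=F\oplus T$ with $T=0$, which is legitimate because Ol'shanskii's theorem already gives you the full structure of the Schur multiplier. The paper instead invokes the fact that $B(2,p)$ is finitely $L$-presented (Proposition 2.14 of Bartholdi) and then its own proposition that finitely $L$-presented $p$-generated groups lie in $\mathcal{C}_p$ --- a criterion which itself reduces to Proposition \ref{decom} via Bartholdi's structure theorem for Schur multipliers of finitely $L$-presented groups. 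Your route is slightly more economical here, since the paper needs Ol'shanskii's computation anyway to see that the kernel is not $p$-torsion; the paper's route has the mild advantage of illustrating the $L$-presentation criterion in action and of not depending on knowing the precise isomorphism type of $H_2G$ to get membership in $\mathcal{C}_p$. For the final step the paper simply cites Theorem 2.7 of Rodr\'iguez--Scherer for $\pi_1 cell_M BG\cong cell_{\Z/p}G$, whereas you sketch the argument through the Chach\'olski fibration; citing the theorem is cleaner, but your sketch is consistent with how the extension $K\to cell_{\Z/p}G\to S_pG$ is derived in Section 3.
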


\begin{proof}

 As $G$ has exponent $p$, it is of course equal to its $p$-socle. As $G$ is finitely $L$-generated (Proposition 2.14 in \cite{Ba03}), $G\in\mathcal{C}_p$, and hence its $\mathbb{Z}/p$-cellularization is defined by an extension

 $$F\rightarrow cell_{\Z /p}G\rightarrow G,$$ where $F=H_2G$ is free abelian and countable, and in particular not $p$-torsion. We conclude by recalling from Theorem 2.7 in \cite{RoSc01} that $\pi_1cell_{M}BG=cell_{\Z /p}G$.

\end{proof}

Note that this fundamental group is in particular $p$-generated.

The examples of exotic $p$-groups that appeared around Burnside problem have some additional features from the point of view of Homotopy Theory. In particular, they supply interesting information about how limited are the localization or completion functors when these are used to identify $p$-torsion in classifying spaces. For example, $B(2,p)$ is $p$-torsion but its classifying space is not $M$-cellular:

\begin{prop}

The classifying space of $B(2,p)$ is not $M$-acyclic (and in particular not $M$-cellular) if $p>665$.

\end{prop}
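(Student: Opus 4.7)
The plan is to exhibit an $M$-null space $W$ admitting an essential pointed map from $BG$; if $BG$ were $M$-acyclic, i.e.\ $P_M BG \simeq \ast$, the universal property of the $M$-nullification would force $[BG, W]_* \cong [P_M BG, W]_* = \ast$, yielding the desired contradiction.

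I would take $W = K(\Z[1/p], 2)$. The first step is to verify that $W$ is $M$-null, i.e.\ $\mathrm{map}_*(M, W) \simeq \ast$. The homotopy groups of this pointed mapping space are the reduced cohomology groups $\tilde{H}^{2-k}(M; \Z[1/p])$ for $0 \le k \le 2$, and trivial otherwise. Since the only nontrivial reduced integral homology of $M$ is $H_1(M) = \Z/p$, while $\Z[1/p]$ is simultaneously $p$-torsion-free and $p$-divisible, the universal coefficient theorem makes every $\mathrm{Hom}$ and $\mathrm{Ext}$ term vanish; hence $\tilde{H}^*(M; \Z[1/p]) = 0$ and $W$ is $M$-null.

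Next I would compute $[BG, W]_* = H^2(BG; \Z[1/p])$ and show it is nonzero. Because $G = B(2,p)$ has exponent $p$, the abelianization $H_1(BG) = G^{ab}$ is a finite $p$-group, so $\mathrm{Ext}(H_1(BG), \Z[1/p]) = 0$ by $p$-divisibility; the universal coefficient theorem then gives $H^2(BG; \Z[1/p]) \cong \mathrm{Hom}(H_2(G), \Z[1/p])$. By the theorem of Ol'shanskii recalled above, $H_2(G)$ is free abelian of countable rank, so this Hom group is visibly nonzero, completing the contradiction.

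The only substantial ingredient is the $M$-nullity of $K(\Z[1/p], 2)$, which reduces to an immediate universal coefficient computation, so I do not foresee any genuine obstacle beyond keeping track of pointed versus unpointed mapping spaces and reduced versus unreduced cohomology in degrees $0$ through $2$.
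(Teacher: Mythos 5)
Your proof is correct, but it takes a genuinely different route from the paper's. The paper simply cites Theorem 6.1 of \cite{RoSc01}, which says that the reduced integral homology of an $M$-acyclic space must be $p$-torsion, and then invokes Ol'shanskii's computation that $H_2B(2,p)$ is free abelian of infinite rank. You instead prove the relevant special case of that theorem by hand: you exhibit the explicit $M$-null space $K(\Z[1/p],2)$ and use the universal property of $P_M$ to show that the nonvanishing of $H^2(BG;\Z[1/p])\cong\mathrm{Hom}(H_2G,\Z[1/p])$ obstructs $M$-acyclicity. Your verification that $K(\Z[1/p],2)$ is $M$-null (via $\tilde H^*(M;\Z[1/p])=0$, using that $\Z[1/p]$ is $p$-torsion-free and $p$-divisible) and that the $\mathrm{Ext}$ term vanishes because $G^{ab}\cong(\Z/p)^2$ are both accurate, and both arguments ultimately rest on the same input, namely Ol'shanskii's theorem on the Schur multiplier. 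What your approach buys is self-containedness — the reader needs no external acyclicity criterion, only the universal property of nullification and the universal coefficient theorem — at the cost of a somewhat longer argument; the paper's citation-based proof is shorter but leans on \cite{RoSc01} as a black box.
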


\begin{proof}

As we have already seen, the Schur multiplier of $B(2,p)$ is torsion-free. But according to Theorem 6.1 in \cite{RoSc01}, the homology of an $M$-acyclic space should be $p$-torsion. Hence, $P_MK(B(2,p),1)$ is non-trivial.

\end{proof}

There is an immediate corollary concerning the position of the classifying space of this Burnside group in the $B\mathbb{Z} /p$-cellular and acyclic hierarchies, which have deserved some interest in the last years (see for example \cite{Dw96}, \cite{CCS07} or \cite{FF11}).

\begin{prop}

For $p>665$, the classifying space of $B(2,p)$ is not $B\mathbb{Z}/p$-acyclic, and hence not $B\mathbb{Z}/p$-cellular.

\end{prop}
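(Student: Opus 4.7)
The plan is to adapt the argument of the preceding proposition, with $B\mathbb{Z}/p$ now playing the role of $M$. The obstruction is once again the second homology: by the Ol'shanskii result already cited, $H_2 B(2,p)$ is a non-trivial free abelian group of countable rank, and in particular is not $p$-torsion. I will argue that this single fact precludes $B\mathbb{Z}/p$-acyclicity of $K(B(2,p),1)$.

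The key general observation to establish is that every $B\mathbb{Z}/p$-acyclic space has $p$-torsion reduced integral homology. For this I would appeal to Chach\'olski's description of the acyclic class $\bar{\mathcal{C}}(B\mathbb{Z}/p)$ as the smallest class of pointed spaces containing $B\mathbb{Z}/p$ that is closed under pointed homotopy colimits and cofibre extensions. Since the reduced homology of $B\mathbb{Z}/p$ itself is $p$-torsion (concentrated in odd degrees, each copy equal to $\mathbb{Z}/p$), and since the class of $p$-torsion abelian groups is closed under arbitrary colimits and under extensions, the Mayer--Vietoris and long exact sequences of singular homology together with its commutation with filtered colimits propagate the property to every space in $\bar{\mathcal{C}}(B\mathbb{Z}/p)$.

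Combining these ingredients finishes the acyclic half of the statement: if $K(B(2,p),1)$ were $B\mathbb{Z}/p$-acyclic, then $H_2 B(2,p)$ would be $p$-torsion, contradicting Ol'shanskii's computation. The cellular half is then immediate, since the cellular class $\mathcal{C}(B\mathbb{Z}/p)$ is contained in $\bar{\mathcal{C}}(B\mathbb{Z}/p)$ (the latter just adds closure under extensions), so any $B\mathbb{Z}/p$-cellular space is in particular $B\mathbb{Z}/p$-acyclic, and the contrapositive yields the desired conclusion.

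The only subtle point is verifying the closure of the property ``reduced integral homology is $p$-torsion'' under the two operations that define $\bar{\mathcal{C}}(B\mathbb{Z}/p)$; this is a routine Serre-class style argument but requires unpacking the precise form of the acyclic closure used in the paper. An alternative route would be to show directly that $B\mathbb{Z}/p$ lies in $\bar{\mathcal{C}}(M)$ and then invoke the previous proposition verbatim; however the homological route seems cleaner and avoids having to construct a hocolim decomposition of $B\mathbb{Z}/p$ out of copies of $M$.
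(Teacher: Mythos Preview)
Your argument is correct, but the paper's proof is precisely the ``alternative route'' you mention at the end and set aside. The paper's one-line proof simply recalls that $B\mathbb{Z}/p$ is $M$-cellular and $M$-acyclic; this gives the containment $\bar{\mathcal{C}}(B\mathbb{Z}/p)\subseteq\bar{\mathcal{C}}(M)$ of acyclic classes, so any $B\mathbb{Z}/p$-acyclic space is automatically $M$-acyclic, and the preceding proposition finishes the job verbatim. No explicit hocolim decomposition of $B\mathbb{Z}/p$ out of copies of $M$ is written down; the fact is treated as standard.

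Your route instead proves from scratch that $B\mathbb{Z}/p$-acyclic spaces have $p$-torsion reduced homology, via a Serre-class closure argument for $\bar{\mathcal{C}}(B\mathbb{Z}/p)$. This is essentially reproving, with $B\mathbb{Z}/p$ in place of $M$, the statement the paper cites as Theorem~6.1 of \cite{RoSc01} in the previous proposition. What you gain is a self-contained argument that does not rely on knowing $B\mathbb{Z}/p\in\bar{\mathcal{C}}(M)$; what the paper gains is brevity and a clean reduction to what was just proved. Both are valid, and your closure verification is indeed routine as you say.
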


\begin{proof}

It is enough to recall that $B\mathbb{Z}/p$ is $M$-cellular and $M$-acyclic.

\end{proof}

It is an interesting and probably very difficult problem to find the concrete values of these functors over the classifying space of $B(2,p)$. Same can be said about Bousfield-Kan completion (see \cite{BK72}) of this space, but at least we can say the following:

\begin{prop}

Let $p>665$ be a prime number. Then for another prime $q\neq p$, the Bousfield-Kan $q$-completion of the classifying space of $B(2,p)$ is a simply connected non-contractible space.

\end{prop}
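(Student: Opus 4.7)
The plan is to apply Bousfield-Kan $\mathbb{F}_q$-completion theory to $X = BG$ with $G = B(2,p)$ and $q \neq p$. Two standard consequences of the theory will be decisive: first, $\pi_1(X^{\wedge}_q)$ identifies with the Bousfield-Kan $\mathbb{F}_q$-nilpotent completion $E_{\mathbb{F}_q}G$; second, the natural map $BG \to (BG)^{\wedge}_q$ induces an isomorphism on $H_*(-;\mathbb{F}_q)$. Both follow from $\mathbb{F}_q$-goodness of $BG$, which I would establish using the Bousfield-Kan criterion (\cite{BK72}, Chapter VII): the $G$-action on the mod-$q$ homology of the universal cover $EG$ is trivially nilpotent since $EG$ is contractible.

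For simple connectivity, I would show $E_{\mathbb{F}_q}G = 1$. An $\mathbb{F}_q$-nilpotent group in the Bousfield-Kan sense admits a finite central series with $\mathbb{F}_q$-vector space subquotients, so has exponent a power of $q$. Any quotient of $G = B(2,p)$ has exponent dividing $p$, and the coprimality $\gcd(p,q) = 1$ forces the $\mathbb{F}_q$-nilpotent quotients of $G$ to be trivial. Their inverse limit $E_{\mathbb{F}_q}G$ is therefore trivial, and $\pi_1((BG)^{\wedge}_q) = 1$.

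For non-contractibility, the second consequence above gives $H_2((BG)^{\wedge}_q;\mathbb{F}_q) \cong H_2(G;\mathbb{F}_q)$. By Ol'shanskii's theorem (already invoked earlier in the section) $H_2(G;\mathbb{Z})$ is free abelian of countable rank. The universal coefficient theorem, combined with $\mathrm{Tor}((\mathbb{Z}/p)^2, \mathbb{F}_q) = 0$ for $q \neq p$, yields $H_2(G;\mathbb{F}_q) \cong H_2(G;\mathbb{Z}) \otimes \mathbb{F}_q$, a non-zero $\mathbb{F}_q$-vector space. Hence $(BG)^{\wedge}_q$ has non-trivial $H_2(-;\mathbb{F}_q)$ and cannot be contractible.

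The main delicate point is the $\mathbb{F}_q$-goodness of $BG$, since $G$ is neither abelian nor nilpotent and the more common criteria (such as nilpotence of $\pi_1$) do not apply directly. The argument above bypasses this by exploiting that $BG$ is aspherical, so that the $\pi_1$-action on universal-cover homology is vacuously nilpotent; granted this, the two consequences invoked above feed cleanly into the exponent argument for $\pi_1$ and into Ol'shanskii's computation for $H_2$.
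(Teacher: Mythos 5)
There is a genuine gap, and it sits exactly at the point you yourself flag as delicate: the $\mathbb{F}_q$-goodness of $BG$. The criterion you invoke --- that $BG$ is $\mathbb{F}_q$-good because $\pi_1$ acts (vacuously) nilpotently on the mod-$q$ homology of the contractible universal cover --- is not a theorem of Bousfield--Kan, and it cannot be: if it were, \emph{every} aspherical space would be $R$-good for every $R$, whereas Bousfield proved that already $S^1\vee S^1=BF_2$ is $\mathbb{Z}/p$-bad. Nilpotency of the $\pi_1$-action on the homology of the fibre is the hypothesis of the mod-$R$ fibre lemma (and, together with nilpotency of $\pi_1$ itself, part of the definition of a nilpotent space); applied to the fibration $EG\to BG\to BG$ it yields nothing. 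Since both halves of your argument (the identification of $\pi_1$ of the completion and the preservation of $\mathbb{F}_q$-homology) are routed through goodness, the proof does not close. A secondary concern is the identification $\pi_1\bigl((BG)^{\wedge}_q\bigr)\cong E_{\mathbb{F}_q}G$: this is not a formal consequence of goodness for a non-nilpotent $G$ (there are $\lim^1$ and non-abelian subtleties in the tower), so even granting goodness the simple-connectivity step would need a further reference.

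The repair is the route the paper takes, and it is shorter than what you attempted. Since $B(2,p)$ has exponent $p$ and $q\neq p$, its abelianization $(\mathbb{Z}/p)^2$ satisfies $(\mathbb{Z}/p)^2\otimes\mathbb{F}_q=0$, i.e.\ $H_1(G;\mathbb{F}_q)=0$, so $G$ is $q$-perfect. Bousfield--Kan VII.3.2 then gives \emph{simultaneously} that $BG$ is $\mathbb{F}_q$-good and that $(BG)^{\wedge}_q$ is simply connected --- no computation of the $\mathbb{F}_q$-nilpotent completion is needed (though your exponent argument for why all $\mathbb{F}_q$-nilpotent quotients of $G$ vanish is correct in itself and is morally the same coprimality observation). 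Your non-contractibility step is fine once goodness is in hand: the universal coefficient computation $H_2(G;\mathbb{F}_q)\cong H_2(G;\mathbb{Z})\otimes\mathbb{F}_q\neq 0$, using Ol'shanskii's theorem that $H_2(G;\mathbb{Z})$ is free abelian of countable rank and the vanishing of the Tor term, matches the paper's argument (and is in fact stated more carefully than in the paper, which calls the multiplier ``infinite cyclic'').
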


\begin{proof}

As $B(2,p)$ is $p$-torsion, it is $q$-perfect, and hence $K(B(2,p),1)^{\wedge}_q$ is simply-connected by \cite{BK72}, VII.3.2. Moreover, as the Schur multiplier is infinite cyclic, the $\mathbb{F}_q$-homology of the classifying space is non-trivial. As it is also $q$-good (because it is $q$-perfect), the $\mathbb{F}_q$-homology is preserved by $q$-completion, so $K(B(2,p),1)^{\wedge}_q$ is not contractible.

\end{proof}

Hence, this Burnside group gives an example of a $p$-group such that the $q$-completion of its classifying space is nontrivial.


\begin{thebibliography}{Tarski}

\bibitem{Ba03} L. Bartholdi, \emph{Endomorphic presentations of branch groups}, J. Algebra, 268 (2003), 419-–443.
\bibitem{BeMa07} A.J. Berrick and M. Matthey, \emph{Homological realization of prescribed abelian groups via K-theory},
Math. Proc. Cambridge Philos. Soc. 142 (2007), 249-–258.
\bibitem{BCFS12} M. Blomgren, W. Chach\'olski, E. Farjoun and Y. Segev, \emph{Idempotent deformations of finite groups}, arXiv: 1204.6566.
\bibitem{Bo75} A.K. Bousfield, \emph{The localization of spaces with respect to homology}, Topology, 14 (1975) 133--150.
\bibitem{Bo94} A.K. Bousfield, \emph{Localization and periodicity in unstable homotopy theory}, J. Amer. Math. Soc., 7 (1994), 831--873.
\bibitem{Bo97} A.K. Bousfield, \emph{Homotopical localizations of spaces}, Amer. J. Math., 119 (1997), no. 6, 1321–-1354.
\bibitem{BK72} A.K. Bousfield and D. Kan, \emph{Homotopy limits, completions and localizations}. Lecture Notes in Maths., 30 (1972), Springer.
\bibitem{Br82} K.S. Brown, \emph{Cohomology of groups}. Graduate Texts in Math., 87 (1982), Springer.
\bibitem{BuDu06} J. Buckner, M. Dugas, \emph{Co-local subgroups of abelian groups}, Abelian groups, rings, modules, and homological algebra, 29–-37, Lect. Notes Pure Appl. Math., 249, Chapman and Hall CRC, Boca Raton, FL, 2006.
\bibitem{Ca00} C. Casacuberta, \emph{On structures preserved by idempotent transformations of groups and homotopy types}, Crystallographic groups and their generalizations (Kortrijk, 1999), Contemporary Mathematics 262, American Mathematical Society, Providence (2000), 39--68.
\bibitem{CCS07} N. Castellana, J. Crespo and J. Scherer, \emph{Deconstructing Hopf spaces}, Invent. Math., 167 (2007), 1-–18.
\bibitem{Ch96} W. Chach\'olski, \emph{On the functors $cell_A$ and $P_A$}, Duke Math. J. 84 (1996), no. 3, 599–-631.
\bibitem{CDFS09} W. Chach\'olski, E. Damian, E. Farjoun and Y. Segev, \emph{The A-core and A-cover of a group}, J. of Alg. 321 (2009), 631–-666.
\bibitem{CFGS07} W. Chach\'olski, E. Farjoun, R. G\"{o}bel and Y. Segev, \emph{Cellular covers of divisible abelian groups}, Contemp. Math. 504, Amer. Math. Soc., Providence, RI, 2009, 77–-97.
\bibitem{CFFS15} W. Chach\'olski, E. Farjoun, R. Flores and J. Scherer, \emph{Cellular properties of nilpotent spaces}, Geom. Topol., 19 (2015), 2741–-2766.
\bibitem{Ch08} I. Chatterji (ed.), \emph{Guido's book of conjectures}, ¨L'Enseignement Mathem\'{a}tique, 40 (2008), Gen\`{e}ve.
\bibitem{Ch80} C. Chou, \emph{Elementary amenable groups}, Illinois J. of Math. 24 (1980), 396--407.
\bibitem{Da57} M. Day, \emph{Amenable semigroups}, Illinois J. Math., 1 (1957), 509–-544.
\bibitem{Dw96} W. Dwyer, \emph{The centralizer decomposition of BG}, In Algebraic Topology: New Trends in Localization and Periodicity,
 Progress in Mathematics, 136 (1996), 167--184.
\bibitem{DGI06} W. Dwyer, J. Greenlees and S. Iyengar, \emph{Duality in algebra and topology}, Adv. Math., 200 (2006), 357--402.
\bibitem{Fa96} E. Farjoun, \emph{Cellular spaces, null spaces and homotopy localization}, Lecture Notes in Maths. 1622 (1996), Springer.
\bibitem{FGS07} E. Farjoun, R. G\"{o}bel and Y. Segev, \emph{Cellular covers of groups}, J. Pure Appl. Algebra 208 (2007), 61--76.
\bibitem{FGSS07} E. Farjoun, R. G\"{o}bel, Y. Segev and S. Shelah \emph{On kernels of cellular covers}, Groups Geom. Dyn. 4 (2007), 409--419.
\bibitem{Fl07} R. Flores, \emph{Nullification and cellularization of classifying spaces of finite groups}, Trans. Amer. Mat. Soc. 359 (2007), 1791--1816.
\bibitem{FF11} R. Flores and R. Foote, \emph{The cellular structure of the classifying spaces of finite groups}, Israel J. Math., 184 (2011), 129–-156.
\bibitem{Fu11} L. Fuchs, \emph{Cellular covers of totally ordered abelian groups}, Math. Slovaca 61, (2011), 429–-438.
\bibitem{FuGo09} L. Fuchs and R. G\"{o}bel, \emph{Cellular covers of abelian groups}, Results Math. 53, (2009), 59-–76.
\bibitem{Go12} R. G\"{o}bel, \emph{Cellular covers for R-modules and varieties of groups}, Forum Math. 24 (2012), 317--337.
\bibitem{GRS12} R. G\"{o}bel, J.L. Rodr\'{i}guez and L. Str\"{u}ngmann, \emph{Cellular covers of cotorsion-free modules}, Fund. Math. 217 (2012), 211--231.
\bibitem{Gr84} R. Grigorchuk, \emph{Degrees of growth of finitely generated groups and the theory of invariant means}, Math-USSR Izv. 25 (1985), 939--985.
\bibitem{Gr99} R. Grigorchuk, \emph{On the system of defining relations and the Schur multiplier of periodic groups
generated by finite automata}, Groups St. Andrews 1997 in Bath, I, Cambridge Univ. Press, Cambridge (1999), 290-–317.
\bibitem{GuSi83} N. Gupta and S. Sidki, \emph{On the Burnside problem for periodic groups}, Math. Z. 182 (1983), 385--388.
\bibitem{Ka87} G. Karpilovsky, \emph{The Schur multiplier}, London Mathematical Society Monographs 2, Oxford Univ. Press (1987).
\bibitem{Oh05} S. \'{O}h\'{O}gain, \emph{On torsion-free minimal abelian groups}, Comm. in Alg. 33 (2005) 2339--2350.
\bibitem{Ol91} A. Ol'shanskii, \emph{Geometry of defining relations in groups}, Mathematics and its applications 70 (1991), Springer.
\bibitem{Pe16} M. Petapirak, \emph{Cellular covers and varieties of groups}, PhD Thesis.
\bibitem{Ra92} D. Ravenel \emph{Nilpotence and periodicity in stable homotopy theory}, Vol 128 of Ann. of Math. Stud., Princeton Univ. Press, Princeton, NJ, 1992.
\bibitem{Ro96} D. Robinson, \emph{A course in theory of groups}, Graduate Texts in Mathematics 80, Springer (1996).
\bibitem{RoSc01} J.L. Rodr\'{i}guez and J. Scherer, \emph{Cellular approximations using Moore spaces}, in Cohomological Methods in Homotopy Theory, Progress in Mathematics 196, Birkhauser Verlag (2001), 357--374.
\bibitem{RoSc08} J.L. Rodr\'{i}guez and J. Scherer, \emph{A connection between cellularization for groups and spaces via two-complexes}, J. Pure Appl. Algebra 212 (2008) 1664--1673.
\bibitem{RS12} J.L. Rodr\'{i}guez and L. Str\"{u}ngmann, \emph{On cellular covers with free kernel}, Med. Journal of  Mathematics. Vol 9 (2012) 295--304.
\bibitem{RS15} J.L. Rodr\'{i}guez and L. Str\"{u}ngmann, \emph{Cellular covers of $\aleph^{1}$-free abelian groups}, Journal of Algebra and its Applications,  14, no. 10 (2015).



\end{thebibliography}
\end{document}